\newtheorem{theorem}{Theorem}[section]
\newtheorem{lemma}[theorem]{Lemma}
\theoremstyle{definition}
\newtheorem{example}[theorem]{Example}
\theoremstyle{remark}
\newtheorem{remark}[theorem]{Remark}
\numberwithin{equation}{section}
\begin{document}
\setcounter{page}{1}

\title[H\"older-Besov boundedness for pseudo-differential operators]{ H\"older-Besov boundedness for periodic pseudo-differential operators }

\author[Duv\'an Cardona S\'anchez]{Duv\'an Cardona $^1$}

\address{$^{1}$ Department of Mathematics, Universidad de los Andes, Colombia.}
\email{\textcolor[rgb]{0.00,0.00,0.84}{duvanc306@gmail.com;
d.cardona@uniandes.edu.co}}

%\address{$^{2}$ Department of Pure Mathematics, Ferdowsi University of Mashhad, P. O. Box 1159, Mashhad 91775, Iran;
%\newline
%Tusi Mathematical Research Group (TMRG), Mashhad, Iran.}
%\email{\textcolor[rgb]{0.00,0.00,0.84}{second@afa.ac.ir}}

\subjclass[2010]{}

\keywords{ Besov spaces, Fourier transform, Bernstein's theorem, Fourier series, Toroidal pseudo-differential operators}

\date{Received: xxxxxx;  Revised: yyyyyy; Accepted: zzzzzz.
\newline \indent $^{1}$ Universidad de los Andes, Mathematics Department, Bogot\'a - Colombia}

\begin{abstract}
In this  work we give H\"older-Besov estimates for periodic Fourier multipliers. We present a class of bounded  pseudo-differential operators on periodic Besov spaces with symbols of limited regularity.\\
\textbf{MSC 2010.} Primary 43A22, 43A77; Secondary 43A15.
\end{abstract} \maketitle

\section{Introduction}

In this paper we study the boundedness of periodic Fourier multipliers and periodic pseudo-differential operators from H\"older spaces into Besov spaces. Let $\sigma:\mathbb{Z}\rightarrow \mathbb{C}$ be a {\em symbol}, the corresponding Fourier multiplier $\text{Op}(\sigma)$ is the periodic pseudo-differential operator  formally defined by the formula
\begin{equation}\label{mt}
\text{Op}(\sigma(\cdot))f=\mathscr{F}^{-1}(\sigma(\xi)\mathscr{F}(f)),
\end{equation}
where $\mathscr{F}$ is the Fourier transform on the torus $\mathbb{T}=[0,2\pi)$ and $\mathscr{F}^{-1}$ is the inverse Fourier transform. In 1979, Agranovich  \cite{ag} proposed a global quantization of periodic pseudo-differential operators on the circle $\mathbb{S}^{1}\equiv\mathbb{T}$. Later, this theory was widely developed by Ruzhansky and Turunen in \cite{Ruz}, where the theory of periodic pseudo-differential operators is considered in arbitrary dimensions. Periodic Besov spaces form a class of function spaces  which are of special interest in analysis and mathematical physics. They can be defined via dyadic decomposition and form scales $B^r_{p,q}(\mathbb{T})$ carrying three indices: $r\in\mathbb{R}$, $0<p,q\leq \infty.$ In the special case $p=q=\infty,$ $\Lambda^r(\mathbb{T})=B^r_{\infty,\infty}(\mathbb{T})$ is nothing else but the familiar space of all H\"older continuous functions of order $0<r<1.$  There are several possibilities concerning the conditions to impose on a symbol $\sigma$ in the attempt to establish a periodic Fourier multiplier theorem of boundedness on Besov spaces and Lebesgue spaces for its corresponding operator (\ref{mt}) (see \cite{Bar, Bie, Bu1, Bu2, Bu3}). In this paper we investigate the action of periodic Fourier multipliers and periodic pseudo-differential operators from H\"older spaces into Besov spaces. Our work is closely related with a classical result by Marcinkiewicz: if $(\sigma(\xi))_{\xi\in\mathbb{Z}}$ is a sequence satisfying the following condition, now  known as variational Marcinkiewicz condition:
\begin{equation}\label{mc}
\Vert \sigma \Vert_{L^{\infty}(\mathbb{Z})}+\sup_{j\geq 0}\sum_{2^j\leq |\xi|\leq 2^{j+1}}|\sigma(\xi+1)-\sigma(\xi)|<\infty,
\end{equation}
then $\text{Op}(\sigma):L^p(\mathbb{T})\rightarrow L^p(\mathbb{T}) $ is a bounded operator for all $1<p<\infty.$ Here one may consider $\Delta \sigma(\cdot)=\sigma(\cdot+1)-\sigma(\cdot)$ as the first derivative of $\sigma.$ As a particular case of Theorem 4.2 in \cite{Ar}, every operator $\text{Op}(\sigma)$ satisfying  \eqref{mc} is a bounded operator from $B^r_{p,q}(\mathbb{T})$ into $B^r_{p,q}(\mathbb{T})$ for all $1<p<\infty,$ $r\in\mathbb{R}$ and $1\leq q\leq \infty.$ We observe that, by Corollary 4.3 in \cite{Ar}, for every $r\in(0,1)$ there exists a Fourier multiplier $\text{Op}(\sigma)$ with $\sigma$ satisfying \eqref{mc}, but with the property that $\text{Op}(\sigma)$ is not a Fourier multiplier from $B^r_{\infty,\infty}(\mathbb{T})$ into  $B^r_{\infty,\infty}(\mathbb{T})$. In order to get, in particular, boundedness of periodic Fourier multipliers on H\"older spaces, we reformulate the variational Marcinkiewicz condition by imposing the following inequality on the symbol:
\begin{equation}\label{rho2}
|\sigma(\xi)|\leq C|\xi|^{-\rho},
\end{equation}
uniformly on $\xi\neq 0,$ for some $0\leq \rho\leq 1.$ Later, by using estimates on Fourier multipliers, we deduce the boundedness of operators with symbols $\sigma(x,\xi)$ of finite regularity on $x.$ More precisely, symbols satisfying inequalities of the type
\begin{equation}\label{rho}
|\Delta_{\xi}^{\alpha}\partial_{x}^{\beta}\sigma(x,\xi)|\leq C_\beta |\xi|^{-\rho-|\alpha|},
\end{equation}
for $|\alpha|\leq l_1,|\beta|\leq l_2,$ $l_{i}<\infty.$ We note that, condition \eqref{rho} is related with the H\"ormander class of symbols on the torus proposed by Ruzhansky and Turunen in \cite{Ruz}. In Section \ref{mrp} we show that, under suitable conditions on the set of indices $p, q, r, s$ and $\rho,$ the $\rho$-condition \eqref{rho} implies the boundedness of $\text{Op}(\sigma(\cdot))$ from $B^s_{\infty,\infty}(\mathbb{T})$ into $B^r_{p,q}(\mathbb{T}),$ then we extend these results to the case of  pseudo-differential operators on the torus. We end Section \ref{mrp} with a discussion of our main results and some applications.\\

\noindent Finally, let us give some references on the topic we use along this paper. The boundedness of Fourier multipliers in $L^p$-spaces, H\"older spaces and Besov spaces has been considered by many authors for a long time. In the general case of Compact Lie groups we refer the reader to the  works of Alexopoulos, Anker, Coifman, Ruzhansky, Turunen and Wirth  \cite{Al, An, Co3, Ruz, Ruz-2, Ruz-T, RWT, Ruz-3}. The general case of operator-valued Fourier multipliers on the torus has been investigated by Arendt, Bu, Barraza, Denk, Hern\'andez, and Nau  in \cite{Ar, Bar, Bie, Bu1, Bu2, Bu3}. $L^p$ and H\"older estimates of periodic pseudo-differential operators can be found in \cite{Duvan1, Duvan2, Duvan3,  Delg} and \cite{s2}. The quantization process, $L^2$-compactness, spectral properties and $L^p$ estimates of pseudo-differential operators on the circle $\mathbb{S}^1\equiv\mathbb{T}$ also can be found in the works of Delgado, Wong and Molahajloo  \cite{DW, s1, s11, s2, MW}. Besov continuity of Fourier multipliers and pseudo-differential operators on general compact Lie groups has been investigated by the author in \cite{Duvan4}.

\section{preliminaries}\label{preliminaries}

We use the standard notation of pseudo-differential operators (see e.g. \cite{Ruz}). The Schwartz space $\mathcal{S}(\mathbb{Z}^n)$ denote the space of functions $\phi:\mathbb{Z}^n\rightarrow \mathbb{C}$ such that 
 \begin{equation}
 \forall M\in\mathbb{R}, \exists C_{M}>0,\, |\phi(\xi)|\leq C_{M}\langle \xi \rangle^M,
 \end{equation}
where $\langle \xi \rangle=(1+|\xi|^2)^{\frac{1}{2}}.$ The toroidal Fourier transform is defined for any $f\in C^{\infty}(\mathbb{T}^n)$ by $$(\mathscr{F}f)(\xi):=\widehat{f}(\xi)=\int_{\mathbb{T}^n}e^{-i\langle x,\xi\rangle}f(x)dx,\,\,\xi\in\mathbb{Z}^n,$$ 
where $dx$ is the Haar measure on the $n$-torus $\mathbb{T}^n=[0,2\pi)^n.$ The inversion formula is given by $$f(x)=\sum_{\xi\in\mathbb{Z}^n}e^{i\langle x,\xi \rangle }\widehat{u}(\xi),\,\, x\in\mathbb{T}^n.$$

We now take up the H\"older space $\Lambda^s,$ $0<s<1.$ According to the usual definition, a function $f$ belongs to $\Lambda^s$ if there exists a constant $A$ so that $|f(x)|\leq A$ almost every where and 
\begin{equation}|f|_{\Lambda^s}:=\sup_{x,y}\frac{|f(x-y)-f(x)|}{|y|^{s}}\leq A.
\end{equation}

We introduce the Besov spaces on the torus using the periodic Fourier transform as follow. Let $r\in\mathbb{R},$ $0\leq q<\infty$ and $0<p\leq \infty.$ If $f$ is a measurable function on $\mathbb{T},$ we say that $f\in B^r_{p,q}(\mathbb{T})$ if $f$ satisfies
\begin{equation}\Vert f \Vert_{B^r_{p,q}}:=\left( \sum_{m=0}^{\infty} 2^{mrq}\Vert \sum_{2^m\leq |\xi|< 2^{m+1}}  e^{ix\xi}\widehat{f}(\xi)\Vert^q_{L^p(\mathbb{T})}\right)^{\frac{1}{q}}<\infty.
\end{equation}
If $q=\infty,$ $B^r_{p,\infty}(\mathbb{T})$ consists of those functions $f$ satisfying
\begin{equation}\Vert f \Vert_{B^r_{p,\infty}}:=\sup_{m\in\mathbb{N}} 2^{mr}\Vert \sum_{2^m\leq |\xi|< 2^{m+1}}  e^{ix\xi}\widehat{f}(\xi)\Vert_{L^p(\mathbb{T})}<\infty.
\end{equation}
In the case of $p=q=\infty$ and $0<r<1 $ we obtain $B^r_{\infty,\infty}(\mathbb{T})=\Lambda^r(\mathbb{T}),$ these are Banach spaces together with the norm $$\Vert f\Vert_{\Lambda^{r}}=|f|_{\Lambda^r}+\sup_{x\in \mathbb{T}}|f(x)|.$$
Similarly to Besov spaces one defines the Triebel-Lizorkin spaces as follows. If $r\in \mathbb{R},$ $0<p\leq \infty $ $0<q< \infty,$ the Triebel-Lizorkin space $F^{r}_{p,q}(\mathbb{T})$ consists of those functions satisfying
\begin{equation} \Vert f\Vert_{F^r_{p,q}(\mathbb{T})}:=\left\Vert\left(   \sum_{s=0}^{\infty}2^{srq}\left| \sum_{2^{s}\leq |\xi|<2^{s+1}}e^{ix\xi}\widehat{f}(\xi) \right\vert^{q}   \right)^{1/q} \right\Vert_{L^p(\mathbb{T})}
\end{equation}
with a similar modification as in Besov spaces in the case $q=\infty.$ An interesting property regarding Besov spaces and Triebel-Lizorkin spaces is that $B^{r}_{p,p}=F^{r}_{p,p}$ for all $0<p<\infty.$ Now, We introduce some classes of pseudo-differential operators. The periodic H\"ormander class $S^m_{\rho,\delta}(\mathbb{T}^n\times \mathbb{R}^n), \,\, 0\leq \rho,\delta\leq 1,$ consists of those functions $a(x,\xi)$ which are smooth in $(x,\xi)\in \mathbb{T}^n\times \mathbb{R}^n$ and which satisfy toroidal symbols inequalities
\begin{equation}
|\partial^{\beta}_{x}\partial^{\alpha}_{\xi}a(x,\xi)|\leq C_{\alpha,\beta}\langle \xi \rangle^{m-\rho|\alpha|+\delta|\beta|}.
\end{equation}
Symbols in $S^m_{\rho,\delta}(\mathbb{T}^n\times \mathbb{R}^n)$ are symbols in $S^m_{\rho,\delta}(\mathbb{R}^n\times \mathbb{R}^n)$ (see \cite{Ruz}) of order $m$ which are 1-periodic in $x.$
If $a(x,\xi)\in S^{m}_{\rho,\delta}(\mathbb{T}^n\times \mathbb{R}^n),$ the corresponding pseudo-differential operator is defined by
\begin{equation}
a(X,D_{x})u(x)=\int_{\mathbb{T}^n}\int_{\mathbb{R}^n}e^{i2\pi\langle x-y,\xi \rangle}a(x,\xi)u(y)d\xi dy.
\end{equation}
The set $S^m_{\rho,\delta}(\mathbb{T}^n\times \mathbb{Z}^n),\, 0\leq \rho,\delta\leq 1,$ consists of  those functions $a(x, \xi)$ which are smooth in $x$  for all $\xi\in\mathbb{Z}^n$ and which satisfy
\begin{equation}
\forall \alpha,\beta\in\mathbb{N}^n,\exists C_{\alpha,\beta}>0,\,\, |\Delta^{\alpha}_{\xi}\partial^{\beta}_{x}a(x,\xi)|\leq C_{\alpha,\beta}\langle \xi \rangle^{m-\rho|\alpha|+\delta|\beta|}.
 \end{equation}

The operator $\Delta$ is the difference operator defined in \cite{Ruz}. The toroidal operator with symbol $a(x,\xi)$ is defined as
\begin{equation}
a(x,D_{x})u(x)=\sum_{\xi\in\mathbb{Z}^n}e^{i 2\pi\langle x,\xi\rangle}a(x,\xi)\widehat{u}(\xi),\,\, u\in C^{\infty}(\mathbb{T}^n).
\end{equation}

\noindent The corresponding class of operators with symbols in $S^m_{\rho,\delta}(\mathbb{T}^n\times \mathbb{Z}^n)$ (resp. $S^m_{\rho,\delta}(\mathbb{T}^n\times \mathbb{R}^n)$) will be denoted by $\Psi^m_{\rho,\delta}(\mathbb{T}^n\times \mathbb{Z}^n),$ (resp. $\Psi^m_{\rho,\delta}(\mathbb{T}^n\times \mathbb{R}^n)$).
There exists a process to interpolate the second argument of symbols on $\mathbb{T}^n\times \mathbb{Z}^n$ in a smooth way to get a symbol defined on $\mathbb{T}^n\times \mathbb{R}^n.$
\begin{theorem}\label{eq}
Let $0\leq \delta \leq 1,$ $0< \rho\leq 1.$ The symbol $a\in S^m_{\rho,\delta}(\mathbb{T}^n\times \mathbb{Z}^n)$ if only if there exists  a Euclidean symbol $a'\in S^m_{\rho,\delta}(\mathbb{T}^n\times \mathbb{R}^n)$ such that $a=a'|_{\mathbb{T}^n\times \mathbb{Z}^n}.$ Moreover, we have $$ \Psi^m_{\rho,\delta}(\mathbb{T}^n\times \mathbb{Z}^n)=\Psi^m_{\rho,\delta}(\mathbb{T}^n\times \mathbb{R}^n) . $$
\end{theorem}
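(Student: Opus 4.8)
This is the symbol equivalence of Ruzhansky and Turunen \cite{Ruz}, and the plan is to prove it in three steps: the easy inclusion of symbol classes, the hard (extension) inclusion, and the resulting identification of the operator classes. \emph{From Euclidean to toroidal symbols:} if $a'\in S^m_{\rho,\delta}(\mathbb{T}^n\times\mathbb{R}^n)$, I would show that $a:=a'|_{\mathbb{T}^n\times\mathbb{Z}^n}$ lies in $S^m_{\rho,\delta}(\mathbb{T}^n\times\mathbb{Z}^n)$ by applying, coordinatewise and iteratively, the identity $\Delta_{\xi_j}g(\xi)=\int_0^1(\partial_{\xi_j}g)(\xi+te_j)\,dt$ to $g=\partial^{\beta}_xa'$. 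This exhibits $\Delta^{\alpha}_\xi\partial^{\beta}_xa'(x,\xi)$ as an average of the derivatives $\partial^{\alpha}_\xi\partial^{\beta}_xa'(x,\xi+\theta)$ with $\theta$ in the unit cube; since $\langle\xi+\theta\rangle$ is comparable to $\langle\xi\rangle$ uniformly for $|\theta|\le\sqrt n$, the bound $|\partial^{\alpha}_\xi\partial^{\beta}_xa'|\le C_{\alpha,\beta}\langle\xi\rangle^{m-\rho|\alpha|+\delta|\beta|}$ transfers at once to the differences of $a$. This step is routine and uses nothing about $\rho,\delta$ beyond $0\le\delta\le1$.

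\emph{From toroidal to Euclidean symbols:} given $a\in S^m_{\rho,\delta}(\mathbb{T}^n\times\mathbb{Z}^n)$ I would build the extension by the interpolation formula
\begin{equation}\label{p:ext}
a'(x,\xi):=\sum_{\eta\in\mathbb{Z}^n}a(x,\eta)\,\phi(\xi-\eta),\qquad\xi\in\mathbb{R}^n,
\end{equation}
with a fixed $\phi\in\mathcal S(\mathbb{R}^n)$ chosen so that $\phi(\eta)=\delta_{\eta,0}$ for all $\eta\in\mathbb{Z}^n$ (so that \eqref{p:ext} really restricts to $a$ on $\mathbb{T}^n\times\mathbb{Z}^n$) and so that the integer translates of $\phi$ form a partition of unity, $\sum_{\eta\in\mathbb{Z}^n}\phi(\xi-\eta)\equiv1$. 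Concretely one lets $\phi$ be the inverse Euclidean Fourier transform of a function $\Theta\in C_0^\infty(\mathbb{R}^n)$ that is identically $1$ near the origin, is supported in a cube $(-1+\varepsilon,1-\varepsilon)^n$ with $\varepsilon>0$, and whose $\mathbb{Z}^n$-periodization is identically $1$; one checks that such $\Theta$ exists, that the periodization condition is precisely $\phi|_{\mathbb{Z}^n}=\delta_{0}$, and that the support condition yields not only the partition of unity but the stronger \emph{polynomial reproduction} identity $\sum_{\eta\in\mathbb{Z}^n}q(\eta)\,\phi(\xi-\eta)=q(\xi)$ for every polynomial $q$. The series \eqref{p:ext}, and all its $x$- and $\xi$-derivatives, converge absolutely because $\phi$ is Schwartz and $a(x,\cdot)$ grows at most polynomially, so $a'$ is smooth and restricts to $a$.

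It remains to verify the symbol estimates, and this is where I expect the real work to lie. Differentiating \eqref{p:ext} gives $\partial^{\beta}_x\partial^{\gamma}_\xi a'(x,\xi)=\sum_{\eta}(\partial^{\beta}_xa)(x,\eta)\,(\partial^{\gamma}\phi)(\xi-\eta)$. For fixed $\xi$, let $\xi_0\in\mathbb{Z}^n$ be a nearest lattice point and substitute for $\eta\mapsto(\partial^{\beta}_xa)(x,\eta)$ its discrete Taylor expansion about $\xi_0$ of order $|\gamma|$ (Newton forward differences), whose remainder is controlled by the differences $\Delta^{\mu}_\xi\partial^{\beta}_xa$ of order $|\mu|=|\gamma|$. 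By polynomial reproduction, $\sum_\eta q(\eta)(\partial^{\gamma}\phi)(\xi-\eta)=(\partial^{\gamma}q)(\xi)=0$ whenever $\deg q<|\gamma|$, so the whole Taylor polynomial is annihilated and only the remainder contributes. Bounding the remainder by the hypothesis $|\Delta^{\mu}_\xi\partial^{\beta}_xa(x,\zeta)|\le C\langle\zeta\rangle^{m-\rho|\gamma|+\delta|\beta|}$, by Peetre's inequality $\langle\zeta\rangle^{t}\lesssim\langle\xi\rangle^{t}\langle\xi-\zeta\rangle^{|t|}$ to absorb the lattice points $\eta$ far from $\xi$, and by the rapid decay of $\partial^{\gamma}\phi$, one obtains $|\partial^{\beta}_x\partial^{\gamma}_\xi a'(x,\xi)|\le C'_{\gamma,\beta}\langle\xi\rangle^{m-\rho|\gamma|+\delta|\beta|}$, i.e. $a'\in S^m_{\rho,\delta}(\mathbb{T}^n\times\mathbb{R}^n)$. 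The bookkeeping of the multivariate discrete Taylor remainder, together with the uniform-in-$\xi$ control of the far lattice points, is the main technical obstacle; everything before and after it is soft.

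\emph{Equality of operator classes:} finally, $\Psi^m_{\rho,\delta}(\mathbb{T}^n\times\mathbb{Z}^n)=\Psi^m_{\rho,\delta}(\mathbb{T}^n\times\mathbb{R}^n)$ follows by combining the two inclusions above with the standard fact --- proved by expanding $u\in C^\infty(\mathbb{T}^n)$ in Fourier series and using the oscillatory integral $\int_{\mathbb{R}^n}e^{i2\pi\langle y,\eta-\xi\rangle}\,dy=\delta(\eta-\xi)$, see \cite{Ruz} --- that the Euclidean quantization of a symbol which is $1$-periodic in $x$ acts on $C^\infty(\mathbb{T}^n)$ as the toroidal quantization of its restriction to $\mathbb{T}^n\times\mathbb{Z}^n$. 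Indeed, an operator in $\Psi^m_{\rho,\delta}(\mathbb{T}^n\times\mathbb{Z}^n)$ equals $a'(X,D_x)$ for the extension $a'$ from \eqref{p:ext} and hence lies in $\Psi^m_{\rho,\delta}(\mathbb{T}^n\times\mathbb{R}^n)$; conversely $b(X,D_x)\in\Psi^m_{\rho,\delta}(\mathbb{T}^n\times\mathbb{R}^n)$ equals the toroidal operator with symbol $b|_{\mathbb{T}^n\times\mathbb{Z}^n}$, which is in $S^m_{\rho,\delta}(\mathbb{T}^n\times\mathbb{Z}^n)$ by the first step.
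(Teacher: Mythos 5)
The paper gives no proof of this theorem at all --- it simply refers to Ruzhansky--Turunen \cite{Ruz} --- and your sketch is a faithful reconstruction of the argument in that reference: the easy direction via $\Delta_{\xi_j}g(\xi)=\int_0^1\partial_{\xi_j}g(\xi+te_j)\,dt$, the extension by interpolation against $\phi=\mathscr{F}^{-1}\Theta$ with $\Theta\in C_0^\infty$ equal to $1$ near $0$ and periodization $\equiv 1$, the discrete Taylor expansion to control the $\xi$-derivatives, and the identification of the two quantizations. So your proposal is correct in outline and takes essentially the same approach as the proof the paper points to; the only step I would word more carefully is the last one, where the identity $\int e^{i2\pi\langle y,\eta-\xi\rangle}dy=\delta(\eta-\xi)$ is only formal for $y$ ranging over $\mathbb{T}^n$ and the exact agreement of $a'(X,D_x)$ with the toroidal quantization really uses the specific interpolated extension (via $\phi(\eta)=\delta_{\eta,0}$), as in \cite{Ruz}.
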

\begin{proof} The proof can be found in \cite{Ruz}.
\end{proof}
The following results provide some properties about composition and invertibility of periodic pseudo-differential operators. Proofs of these assertions can be found in \cite{Ruz, Ruz-2}.

\begin{theorem}\label{compo}$(\textsl{Composition formula}).$ Let $0\leq \delta<\rho\leq 1.$ The composition $\tau(X,D) \circ \sigma (X,D)$ of two pseudo-differential operators with symbols $\tau \in S^l_{\rho,\delta}(\mathbb{T}^n\times \mathbb{Z}^n)$ and $\sigma \in S^m_{\rho,\delta}(\mathbb{T}^n\times \mathbb{Z}^n)$ is a pseudo-differential operator, and its toroidal  symbol $\psi(x,\xi)$ has the following asymptotic expansion,

\begin{equation}
\psi(x,\xi)\approx \sum_{\gamma \geq 0}\frac{1}{\gamma !}\Delta^{\gamma}_{\xi}\tau(x,\xi)\cdot D^{(\gamma)}_{x}\sigma(x,\xi). 
\end{equation}

\end{theorem}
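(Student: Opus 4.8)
The plan is to reconstruct the classical derivation of the toroidal symbolic calculus (as in \cite{Ruz, Ruz-2}), adapted to the notation above. \textbf{Step 1: explicit computation of the composed operator.} I would apply $\tau(X,D)\circ\sigma(X,D)$ to a test function $u\in C^\infty(\mathbb{T}^n)$. Writing $\sigma(X,D)u(y)=\sum_{\eta\in\mathbb{Z}^n}e^{i2\pi\langle y,\eta\rangle}\sigma(y,\eta)\widehat u(\eta)$ and taking toroidal Fourier coefficients gives $\widehat{\sigma(X,D)u}(\xi)=\sum_{\eta\in\mathbb{Z}^n}\widehat\sigma(\xi-\eta,\eta)\widehat u(\eta)$, where $\widehat\sigma(\cdot,\eta)$ is the Fourier coefficient of $x\mapsto\sigma(x,\eta)$. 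Applying $\tau(X,D)$, interchanging the sums in $\xi$ and $\eta$ (legitimate since $\widehat u$ decays rapidly and the symbols are polynomially bounded) and substituting $\theta=\xi-\eta$, one is led to the candidate symbol
\[
\psi(x,\eta)=\sum_{\theta\in\mathbb{Z}^n}e^{i2\pi\langle x,\theta\rangle}\,\tau(x,\eta+\theta)\,\widehat\sigma(\theta,\eta).
\]
This series converges absolutely: smoothness of $\sigma$ in $x$ together with the symbol bounds gives $|\widehat\sigma(\theta,\eta)|\le C_{N',\beta}\langle\theta\rangle^{-N'}\langle\eta\rangle^{m+\delta N'}$ for every $N'$, while $|\tau(x,\eta+\theta)|\lesssim\langle\eta\rangle^{l}\langle\theta\rangle^{|l|}$ by Peetre's inequality.

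\textbf{Step 2: discrete Taylor expansion.} Into this formula I would insert the toroidal (finite-difference) Taylor expansion of $\tau$ in the covariable: for each $N$,
\[
\tau(x,\eta+\theta)=\sum_{|\gamma|<N}\frac{1}{\gamma!}\,\theta^{(\gamma)}\,\Delta^{\gamma}_{\xi}\tau(x,\eta)+R_{N}(x,\eta,\theta),
\]
where $\theta^{(\gamma)}$ denotes the product of falling factorials and $R_{N}$ is the associated remainder (controlled by $N$-th order forward differences of $\tau$ between $\eta$ and $\eta+\theta$). \textbf{Step 3: the main terms collapse.} Since $D^{(\gamma)}_{x}$ is, by definition, the operator whose toroidal symbol is $\theta\mapsto\theta^{(\gamma)}$, we have $\sum_{\theta\in\mathbb{Z}^n}e^{i2\pi\langle x,\theta\rangle}\theta^{(\gamma)}\widehat\sigma(\theta,\eta)=D^{(\gamma)}_{x}\sigma(x,\eta)$, so the terms with $|\gamma|<N$ in the expansion of $\psi$ sum exactly to $\sum_{|\gamma|<N}\frac{1}{\gamma!}\Delta^{\gamma}_{\xi}\tau(x,\eta)\,D^{(\gamma)}_{x}\sigma(x,\eta)$, the asserted asymptotic series. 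It then remains to show that the remainder
\[
r_{N}(x,\eta)=\sum_{\theta\in\mathbb{Z}^n}e^{i2\pi\langle x,\theta\rangle}R_{N}(x,\eta,\theta)\,\widehat\sigma(\theta,\eta)
\]
belongs to $S^{l+m-(\rho-\delta)N}_{\rho,\delta}(\mathbb{T}^n\times\mathbb{Z}^n)$.

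\textbf{Step 4 (the main obstacle): the remainder estimate.} This is the technical heart. One bounds $R_{N}(x,\eta,\theta)$ by $N$-th differences of $\tau$, which by the symbol inequalities cost $\langle\eta+\theta\rangle^{l-\rho N}\langle\theta\rangle^{\text{(power)}}$; combining this with $|\widehat\sigma(\theta,\eta)|\le C_{N',\beta}\langle\theta\rangle^{-N'}\langle\eta\rangle^{m+\delta N'}$ for a sufficiently large $N'$, summing the resulting convergent series in $\theta$, and absorbing all $\langle\theta\rangle$-powers via Peetre's inequality yields the order $l+m-(\rho-\delta)N$. It is precisely here that the hypothesis $\delta<\rho$ is used: it forces this order to decrease strictly as $N\to\infty$, which is what makes the expansion genuinely asymptotic. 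To finish one must verify the full symbol inequalities for $\Delta^{\alpha}_{\xi}\partial^{\beta}_{x}r_{N}$: differentiation in $x$ under the sum only improves the $\theta$-decay, while applying the discrete Leibniz rule for $\Delta_{\xi}$ to $R_{N}(x,\eta,\theta)\widehat\sigma(\theta,\eta)$ produces each time an extra gain $\langle\eta\rangle^{-\rho}$ (up to a $\theta$-factor again handled by Peetre). \textbf{Conclusion.} Since these estimates hold for every $N$, the difference between $\psi$ and any finite partial sum lies in an arbitrarily low-order Hörmander class — this is exactly the meaning of the relation $\approx$ — and in particular $\psi\in S^{l+m}_{\rho,\delta}(\mathbb{T}^n\times\mathbb{Z}^n)$, so $\tau(X,D)\circ\sigma(X,D)$ is a pseudo-differential operator of the stated type. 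A standard density argument extends the identity from $C^\infty(\mathbb{T}^n)$ to the relevant distribution spaces.
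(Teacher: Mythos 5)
Your outline is correct and is essentially the standard argument: the paper itself offers no proof of this theorem but simply defers to \cite{Ruz, Ruz-2}, and your four steps (explicit computation of the composed symbol $\psi(x,\eta)=\sum_{\theta}e^{i2\pi\langle x,\theta\rangle}\tau(x,\eta+\theta)\widehat\sigma(\theta,\eta)$, discrete Taylor expansion in the covariable, identification of the main terms via $D^{(\gamma)}_x$, and the remainder estimate using Peetre's inequality and $\delta<\rho$) faithfully reconstruct the proof given there. No gaps worth flagging at this level of detail.
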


A pseudo-differential operator $\sigma(x,\xi)\in S^m_{\rho,\delta}$ is called elliptic, if for every $M>0,$
there exists $R>0$ such that $|\sigma(x,\xi)|\geq R\langle \xi \rangle^{m}$ if $|\xi|\geq M.$

\begin{theorem}\label{para}$(\textsl{Parametrix existence}).$ Let $0\leq \delta<\rho\leq 1.$ For every elliptic pseudo-differential operators with symbol $\sigma \in S^m_{\rho,\delta}(\mathbb{T}^n\times \mathbb{Z}^n)$ there exists $\tau \in S^{-m}_{\rho,\delta}(\mathbb{T}^n\times \mathbb{Z}^n)$ such that

\begin{equation}
\sigma(X,D)\circ \tau(X,D)=I+R,\\ \tau(x,D)\circ \sigma(X,D)=I+S,
\end{equation}
where, $S,R$ are pseudo-differential operators with symbols in $S^{-\infty}=\cap_{m}S_{\rho,\delta}^m.$
\end{theorem}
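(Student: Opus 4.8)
The plan is to reduce the toroidal assertion to the (classical) Euclidean parametrix construction by means of the symbol identification in Theorem~\ref{eq}, carry out the construction there, and transport the result --- together with its smoothing remainders --- back to $\mathbb{T}^{n}\times\mathbb{Z}^{n}$. First I would pass to a Euclidean symbol: by Theorem~\ref{eq} choose $\sigma'\in S^{m}_{\rho,\delta}(\mathbb{T}^{n}\times\mathbb{R}^{n})$ with $\sigma'|_{\mathbb{T}^{n}\times\mathbb{Z}^{n}}=\sigma$ and $\sigma'(X,D)=\sigma(X,D)$ as operators. To see that ellipticity survives, let $\xi\in\mathbb{R}^{n}$ with $|\xi|$ large and let $k\in\mathbb{Z}^{n}$ be a nearest integer point; since $\sigma'\in S^{m}_{\rho,\delta}$ we have $|\nabla_{\xi}\sigma'(x,\eta)|\le C\langle\eta\rangle^{m-\rho}$, so by the mean value theorem $|\sigma'(x,\xi)-\sigma(x,k)|=|\sigma'(x,\xi)-\sigma'(x,k)|\le C'\langle\xi\rangle^{m-\rho}$. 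Because $\rho>0$ and $\langle k\rangle$ is comparable to $\langle\xi\rangle$, the hypothesis $|\sigma(x,k)|\ge R\langle k\rangle^{m}$ for $|k|\ge M$ upgrades to $|\sigma'(x,\xi)|\ge\tfrac{R}{2}\langle\xi\rangle^{m}$ for $|\xi|\ge M'$; thus $\sigma'$ is an elliptic Euclidean symbol of order $m$.

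Next I would build a first approximation to the inverse. Fix $\chi\in C^{\infty}(\mathbb{R}^{n})$ with $\chi(\xi)=0$ for $|\xi|\le M'$ and $\chi(\xi)=1$ for $|\xi|\ge 2M'$, and set $\tau_{0}(x,\xi):=\chi(\xi)/\sigma'(x,\xi)$. Combining the Leibniz rule with the quotient and chain rules for $\partial^{\alpha}_{\xi}\partial^{\beta}_{x}(1/\sigma')$ and the lower bound just obtained gives $\tau_{0}\in S^{-m}_{\rho,\delta}(\mathbb{T}^{n}\times\mathbb{R}^{n})$. Because $\delta<\rho$, the composition formula of Theorem~\ref{compo} applies and shows that the symbol of $\tau_{0}(X,D)\circ\sigma'(X,D)$ has leading term $\tau_{0}\sigma'=\chi=1-(1-\chi)$, with $1-\chi\in S^{-\infty}$, while all remaining terms --- and hence the full symbol of $R_{1}:=I-\tau_{0}(X,D)\circ\sigma'(X,D)$ --- lie in $S^{-(\rho-\delta)}_{\rho,\delta}$, a class of strictly negative order.

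Then I would iterate by a Neumann series. The symbols of the powers $R_{1}^{k}$ lie in $S^{-k(\rho-\delta)}_{\rho,\delta}$, a family whose orders tend to $-\infty$, so the formal series $\big(\sum_{k\ge 0}R_{1}^{k}\big)\circ\tau_{0}(X,D)$ can be realized, by asymptotic summation of symbols, as $\tau'(X,D)$ for some $\tau'\in S^{-m}_{\rho,\delta}(\mathbb{T}^{n}\times\mathbb{R}^{n})$, and the telescoping identity then gives $\tau'(X,D)\circ\sigma'(X,D)=I+S'$ with the symbol of $S'$ in $S^{-\infty}$. The mirror-image argument on the other side produces $\widetilde{\tau}'\in S^{-m}_{\rho,\delta}(\mathbb{T}^{n}\times\mathbb{R}^{n})$ with $\sigma'(X,D)\circ\widetilde{\tau}'(X,D)=I+R'$, the symbol of $R'$ in $S^{-\infty}$, and the standard comparison $\tau'\equiv\tau'\circ\sigma'\circ\widetilde{\tau}'\equiv\widetilde{\tau}'$ modulo smoothing operators lets me take $\widetilde{\tau}'=\tau'$. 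Finally, restricting back, $\tau:=\tau'|_{\mathbb{T}^{n}\times\mathbb{Z}^{n}}\in S^{-m}_{\rho,\delta}(\mathbb{T}^{n}\times\mathbb{Z}^{n})$ and $\tau(x,D)=\tau'(X,D)$ by Theorem~\ref{eq}, which also identifies the smoothing operators on $\mathbb{T}^{n}\times\mathbb{Z}^{n}$ with those on $\mathbb{T}^{n}\times\mathbb{R}^{n}$; hence $\sigma(X,D)\circ\tau(X,D)=I+R$ and $\tau(x,D)\circ\sigma(X,D)=I+S$ with $R,S$ having symbols in $S^{-\infty}$, as claimed.

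The hard part is the iterative step. One must check with care that the reciprocal $\chi/\sigma'$ genuinely lies in $S^{-m}_{\rho,\delta}$ --- that is, that the negative gain $\rho-\delta$ furnished by the quotient rule is not destroyed under iterated differentiation --- and that an asymptotic series of symbols whose orders decrease by a fixed positive amount at each step is realized by an actual symbol in the class (a Borel-type summation argument). The hypothesis $\delta<\rho$ is used precisely here: it provides the strictly positive gain $\rho-\delta$ at every stage of the Neumann series and makes the expansion in Theorem~\ref{compo} meaningful; and $\rho>0$, which follows from $\delta<\rho$, is what keeps ellipticity alive under the extension of the first step. The reduction to a Euclidean symbol and the bookkeeping in the last paragraph, by contrast, are routine once the symbolic calculus and Theorem~\ref{eq} are available.
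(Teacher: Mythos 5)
Your construction is correct and is essentially the standard parametrix argument that the paper invokes only by citation to Ruzhansky--Turunen: invert the symbol on the elliptic region via a cutoff, apply the composition formula, and close the Neumann series by asymptotic summation, with $\delta<\rho$ supplying the strictly positive gain $\rho-\delta$ at each step. The only cosmetic difference is your detour through the Euclidean extension of Theorem \ref{eq} --- the cited proof runs the same iteration directly in the toroidal calculus with the difference operators $\Delta_\xi$; note in this connection that Theorem \ref{compo} as stated applies to symbols on $\mathbb{T}^n\times\mathbb{Z}^n$, so you should either restrict back before composing or invoke the Euclidean composition formula, which by Theorem \ref{eq} changes nothing of substance.
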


As a consequence of the Proposition 6 in \cite{Eli} and Theorem \ref{eq}, the continuity property of pseudo-differential operators in the H\"older spaces is contained in the following theorems. First we consider the case of operators on $\mathbb{R}^n$ as follow.
\begin{theorem}\label{hol'}
Suppose $\sigma$ is a symbol in $S^m_{1,0}(\mathbb{R}^n\times \mathbb{R}^n).$ Then the operator $\sigma(X,D)$ is a bounded mapping from $\Lambda^{s}(\mathbb{R}^n)$ into $\Lambda^{s-m}(\mathbb{R}^n)$ whenever $m<s\leq 1.$
\end{theorem}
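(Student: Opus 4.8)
The plan is to reduce the statement to an equivalent dyadic estimate and to run a Littlewood--Paley argument. Since $\Lambda^{t}(\mathbb{R}^n)=B^{t}_{\infty,\infty}(\mathbb{R}^n)$ (the H\"older--Zygmund scale), it suffices to show that $v:=\sigma(X,D)u$ satisfies $\|\Delta_k v\|_{L^\infty}\leq C\,2^{-k(s-m)}\|u\|_{\Lambda^s}$ for all $k\geq 0$, where $\{\Delta_k\}_{k\geq0}=\{\varphi_k(D)\}_{k\geq0}$ is a fixed smooth Littlewood--Paley partition of unity on $\mathbb{R}^n$ with $\operatorname{supp}\varphi_k\subset\{|\xi|\sim2^k\}$ for $k\geq1$. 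Rather than decomposing $u$, I would decompose the symbol: put $\sigma_j(x,\xi)=\sigma(x,\xi)\varphi_j(\xi)$, so that $\sigma(X,D)=\sum_{j\geq0}\sigma_j(X,D)$ with convergence in $\mathcal S'$. Because $\sigma_j$ is frequency-localized in $\{|\xi|\sim2^j\}$ one has $\sigma_j(X,D)u=\sigma_j(X,D)(\widetilde\Delta_j u)$ with $\widetilde\Delta_j$ a fattened block, and the Besov norm gives $\|\widetilde\Delta_j u\|_{L^\infty}\leq C\,2^{-js}\|u\|_{\Lambda^s}$.

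The analytic core is a family of uniform estimates for the pieces $v_j:=\sigma_j(X,D)u$. Writing the Schwartz kernel of $\sigma_j(X,D)$ as $K_j(x,x-y)$ with $K_j(x,z)=\int_{\mathbb{R}^n}e^{iz\cdot\xi}\sigma_j(x,\xi)\,d\xi$, repeated integration by parts in $\xi$ — using $|\partial^\alpha_\xi\partial^\beta_x\sigma(x,\xi)|\leq C_{\alpha,\beta}\langle\xi\rangle^{m-|\alpha|}$, which is exactly where $\delta=0$ is used — yields $|K_j(x,z)|\leq C_N\,2^{j(m+n)}(1+2^j|z|)^{-N}$ for every $N$, hence $\sup_x\|K_j(x,\cdot)\|_{L^1}\leq C\,2^{jm}$ and $\|\sigma_j(X,D)\|_{L^\infty\to L^\infty}\leq C\,2^{jm}$; the same bound holds for $(\partial^\beta_x\sigma_j)(X,D)$ for each fixed $\beta$, since differentiating in $x$ does not raise the order. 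Combining this with Bernstein's inequality $\|\nabla^M\widetilde\Delta_j u\|_{L^\infty}\leq C\,2^{jM}\|\widetilde\Delta_j u\|_{L^\infty}$ and the Leibniz identity $\nabla^M(\sigma_j(X,D)g)=\sum_{\alpha+\beta=M}c_{\alpha\beta}(\partial^\beta_x\sigma_j)(X,D)(D^\alpha g)$ gives the smoothness bound $\|\nabla^M v_j\|_{L^\infty}\leq C_M\,2^{j(m+M-s)}\|u\|_{\Lambda^s}$ for every $M\geq0$.

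I would then conclude by comparing $k$ with $j$. For $k\leq j$ the crude estimate $\|\Delta_k v_j\|_{L^\infty}\leq C\|v_j\|_{L^\infty}\leq C\,2^{j(m-s)}\|u\|_{\Lambda^s}$ is enough; for $k>j$, the vanishing moments of $\Delta_k$ give $\|\Delta_k v_j\|_{L^\infty}\leq C\,2^{-kM}\|\nabla^M v_j\|_{L^\infty}\leq C\,2^{(j-k)M}2^{j(m-s)}\|u\|_{\Lambda^s}$. Summing over $j$ and choosing $M>s-m$, the two geometric series $\sum_{j<k}2^{-(k-j)M}2^{j(m-s)}$ and $\sum_{j\geq k}2^{j(m-s)}$ are each $\leq C\,2^{k(m-s)}$ — the second one converging precisely because $m-s<0$, i.e.\ by the standing hypothesis $m<s$ — and we obtain $\|\Delta_k v\|_{L^\infty}\leq C\,2^{-k(s-m)}\|u\|_{\Lambda^s}$, which is the desired estimate. (This is also contained in \cite[Proposition~6]{Eli}.)

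The hard part will be the kernel estimate $\sup_x\|K_j(x,\cdot)\|_{L^1}\leq C\,2^{jm}$, together with its stability when $\sigma_j$ is replaced by $\partial^\beta_x\sigma_j$: on the near-diagonal region $|z|\lesssim2^{-j}$ one uses only the trivial bound $\int_{|\xi|\sim2^j}|\sigma_j(x,\xi)|\,d\xi\leq C\,2^{j(m+n)}$, but on the complementary region the non-stationary phase estimate must be pushed through $N$ times, and this is the step that fails for $\delta>0$. The remaining ingredients — the identification $\Lambda^t=B^t_{\infty,\infty}$, the reduction $\sigma_j(X,D)u=\sigma_j(X,D)(\widetilde\Delta_j u)$, and the summation of the geometric series — are routine once the hypothesis $m<s$ is in force.
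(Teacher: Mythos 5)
Your argument is correct and is, in essence, the proof the paper relies on: the paper does not prove this theorem itself but defers to Proposition~6 in \cite{Eli}, whose proof is exactly your scheme of dyadically decomposing the symbol, establishing the kernel bounds $\sup_x\Vert K_j(x,\cdot)\Vert_{L^1}\lesssim 2^{jm}$ to get $\Vert\sigma_j(X,D)\Vert_{L^\infty\to L^\infty}\lesssim 2^{jm}$, and summing via the Littlewood--Paley characterization of the H\"older--Zygmund scale under the hypothesis $m<s$. No gaps; your remark that $\Lambda^{s-m}$ must be read as $B^{s-m}_{\infty,\infty}$ (Zygmund class when $s-m\geq 1$) is the right reading of the statement.
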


\begin{theorem}\label{hol}
Suppose $\sigma$ is a symbol in $S^m_{1,0}(\mathbb{T}^n\times \mathbb{Z}^n).$ Then the operator $\text{Op}(\sigma)$ is a bounded mapping from $\Lambda^{s}(\mathbb{T}^n)$ into $\Lambda^{s-m}(\mathbb{T}^n)$ whenever $m<s\leq 1.$
\end{theorem}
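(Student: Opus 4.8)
The plan is to derive the toroidal statement from its Euclidean analogue, Theorem \ref{hol'}, by means of the exact quantization correspondence of Theorem \ref{eq}. First I would invoke Theorem \ref{eq}: since $\sigma\in S^m_{1,0}(\mathbb{T}^n\times\mathbb{Z}^n)$, there is a Euclidean symbol $\sigma'\in S^m_{1,0}(\mathbb{T}^n\times\mathbb{R}^n)$ with $\sigma=\sigma'|_{\mathbb{T}^n\times\mathbb{Z}^n}$ and, as operators, $\text{Op}(\sigma)$ is the restriction to $\mathbb{T}^n$ of $\sigma'(X,D)$. Because $\sigma'$ is periodic in $x$, it also belongs to $S^m_{1,0}(\mathbb{R}^n\times\mathbb{R}^n)$, so Theorem \ref{hol'} applies and gives that $\sigma'(X,D)\colon\Lambda^{s}(\mathbb{R}^n)\to\Lambda^{s-m}(\mathbb{R}^n)$ is bounded whenever $m<s\leq 1$.

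Next I would transfer this bound to the torus. Given $f\in\Lambda^{s}(\mathbb{T}^n)$, let $\widetilde f$ denote its periodic extension to $\mathbb{R}^n$; since the H\"older (resp. Zygmund) spaces on $\mathbb{R}^n$ impose no decay, $\widetilde f\in\Lambda^{s}(\mathbb{R}^n)$ with $\Vert\widetilde f\Vert_{\Lambda^{s}(\mathbb{R}^n)}\lesssim\Vert f\Vert_{\Lambda^{s}(\mathbb{T}^n)}$, the two norms being comparable because for a periodic function the supremum defining the seminorm may be restricted, up to a constant, to bounded increments. Theorem \ref{hol'} then yields $\sigma'(X,D)\widetilde f\in\Lambda^{s-m}(\mathbb{R}^n)$ with norm $\lesssim\Vert f\Vert_{\Lambda^{s}(\mathbb{T}^n)}$. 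A Fourier-series computation shows that $\sigma'(X,D)\widetilde f$ is again periodic: for $f\in C^\infty(\mathbb{T}^n)$ one has $\sigma'(X,D)\widetilde f(x)=\sum_{\xi\in\mathbb{Z}^n}e^{i\langle x,\xi\rangle}\sigma'(x,\xi)\widehat f(\xi)$, and each summand is invariant under the period shifts of $x$ because $\sigma'$ is periodic in $x$ and $\xi\in\mathbb{Z}^n$. Hence the restriction of $\sigma'(X,D)\widetilde f$ to $\mathbb{T}^n$ lies in $\Lambda^{s-m}(\mathbb{T}^n)$ with comparable norm, and by Theorem \ref{eq} this restriction equals $\text{Op}(\sigma)f$; chaining the three norm comparisons gives $\Vert\text{Op}(\sigma)f\Vert_{\Lambda^{s-m}(\mathbb{T}^n)}\leq C\Vert f\Vert_{\Lambda^{s}(\mathbb{T}^n)}$.

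The delicate point is that the identity $\text{Op}(\sigma)f=\bigl(\sigma'(X,D)\widetilde f\bigr)\big|_{\mathbb{T}^n}$ has to hold for every $f\in\Lambda^{s}(\mathbb{T}^n)$, whereas the computation above only settles it for $f\in C^\infty(\mathbb{T}^n)$ and the trigonometric polynomials are not dense in $\Lambda^{s}(\mathbb{T}^n)$. I would close this gap by approximation and compactness: mollifying, $f_\varepsilon=f\ast\varphi_\varepsilon\in C^\infty(\mathbb{T}^n)$ satisfies $\Vert f_\varepsilon\Vert_{\Lambda^{s}(\mathbb{T}^n)}\leq\Vert f\Vert_{\Lambda^{s}(\mathbb{T}^n)}$ and $f_\varepsilon\to f$ in $\mathcal{D}'(\mathbb{T}^n)$, so $\text{Op}(\sigma)f_\varepsilon\to\text{Op}(\sigma)f$ in $\mathcal{D}'(\mathbb{T}^n)$ by continuity of the operator; on the other hand the uniform estimate $\Vert\text{Op}(\sigma)f_\varepsilon\Vert_{\Lambda^{s-m}(\mathbb{T}^n)}\leq C\Vert f\Vert_{\Lambda^{s}(\mathbb{T}^n)}$ combined with the compact embedding $\Lambda^{s-m}(\mathbb{T}^n)\hookrightarrow C(\mathbb{T}^n)$ lets one extract a uniformly convergent subsequence whose limit is forced to be $\text{Op}(\sigma)f$ and which inherits the $\Lambda^{s-m}$ bound by lower semicontinuity of that norm. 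This approximation/compactness step is the main obstacle; everything else is routine bookkeeping with Theorems \ref{eq} and \ref{hol'}.
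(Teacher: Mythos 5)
Your proposal follows exactly the route the paper intends: Theorem \ref{hol} is stated there as an immediate consequence of the Euclidean result (Theorem \ref{hol'}, i.e.\ Proposition 6 in \cite{Eli}) combined with the equality of quantizations $\Psi^m_{1,0}(\mathbb{T}^n\times\mathbb{Z}^n)=\Psi^m_{1,0}(\mathbb{T}^n\times\mathbb{R}^n)$ from Theorem \ref{eq}, which is precisely your argument. Your additional care with the periodic extension and with the non-density of $C^\infty(\mathbb{T}^n)$ in $\Lambda^s$ (handled via mollification and the compact embedding into $C(\mathbb{T}^n)$) only makes explicit details the paper leaves implicit.
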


Our main results are analogues of the Theorem \ref{hol}, but we consider symbols with limited smoothness on the configuration variables $(x,\xi)$.

\section{H\"older-Besov boundedness of periodic operators} \label{mrp}

\subsection{Main results and proofs} In this section we present the proof of our main results. Although all results in this paper are presented for the torus $\mathbb{T}^1$ only, extensions to
the torus $\mathbb{T}^n$ are valid. First, we consider the H\"older-Besov boundedness of periodic H\"older multipliers. Later we extend this result to the case of pseudo-differential operators by considering the Sobolev embedding theorem. This approach was used by Ruzhansky and Wirth \cite{Ruz-3}, (see also \cite{Ruz-T} and \cite{RWT}) in order to get  $L^p$ multiplier theorems for non-invariant pseudo-differential operators on compact Lie groups. We reserve the notaci\'on $A\lesssim B$ if there exists $c>0$ independent of $A$ and $B$ such that $A\leq c\cdot B.$ 

\begin{theorem}\label{main} Let $ \rho\in[0,1]$ and $\sigma(\xi)$ be a symbol satisfying the $\rho$-condition. Then, the corresponding Fourier multiplier $\textnormal{Op}(\sigma):B^{s}_{\infty,\infty}(\mathbb{T})\rightarrow B^{r}_{p,q}(\mathbb{T})$ is a bounded operator for all $r+\frac{1}{2}-\rho<s\leq 1,$ $0< p\leq \infty$ and $0<q< \infty.$ If we assume $r+\frac{1}{2}-\rho\leq s\leq 1,$  we obtain the boundedness of $\textnormal{Op}(\sigma)$ from $B^{s}_{\infty,\infty}$ into $B^{r}_{p,\infty}.$
\end{theorem}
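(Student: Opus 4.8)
The plan is to estimate directly the Besov norm of $\textnormal{Op}(\sigma)f$ by exploiting the fact that the Littlewood--Paley pieces of $\textnormal{Op}(\sigma)f$ and of $f$ are supported on the same dyadic annuli, since $\textnormal{Op}(\sigma)$ is a Fourier multiplier and hence commutes with the spectral projections. Write $\delta_m f = \sum_{2^m\le |\xi|<2^{m+1}} e^{ix\xi}\widehat f(\xi)$ for the $m$-th dyadic block. Then $\delta_m(\textnormal{Op}(\sigma)f) = \sum_{2^m\le|\xi|<2^{m+1}} \sigma(\xi) e^{ix\xi}\widehat f(\xi)$, and the $\rho$-condition \eqref{rho2} gives $|\sigma(\xi)|\le C\, 2^{-m\rho}$ on that annulus. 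The first main step is therefore the pointwise/$L^p$ bound $\Vert \delta_m(\textnormal{Op}(\sigma)f)\Vert_{L^p(\mathbb{T})} \lesssim 2^{-m\rho}\Vert \delta_m f\Vert_{L^p(\mathbb{T})}$; this requires controlling the multiplier with symbol $\sigma(\xi)/\sigma$-restricted-to-the-annulus uniformly in $m$, which I would do via a Bernstein-type / Fourier-series argument (the paper's stated tools: Bernstein's theorem and the $L^\infty(\mathbb{Z})$ bound), using that the multiplier has bounded variation at scale $2^m$ after rescaling. For $p<1$ one uses the quasi-norm version.

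The second step converts the $B^s_{\infty,\infty}$ information into an $L^p$ estimate on the dyadic blocks of $f$. Since $\mathbb{T}$ has finite measure, $\Vert\delta_m f\Vert_{L^p}\lesssim \Vert\delta_m f\Vert_{L^\infty}\lesssim 2^{-ms}\Vert f\Vert_{B^s_{\infty,\infty}}$ when $p\le\infty$; however this crude embedding loses too much and would only give the exponent $s$ rather than $s+\rho-\frac12$. The $\frac12$ saving must come from a more careful estimate: on a single annulus $\{2^m\le|\xi|<2^{m+1}\}$ there are $\sim 2^m$ frequencies, so by Cauchy--Schwarz/Hausdorff--Young (or Bernstein on a band-limited function, $\Vert\delta_m f\Vert_{L^\infty}\lesssim 2^{m/2}\Vert\delta_m f\Vert_{L^2}$ and conversely $\Vert\delta_m f\Vert_{L^2}\le \Vert\delta_m f\Vert_{L^\infty}$, combined with $\Vert\delta_m f\Vert_{L^p}\le \Vert\delta_m f\Vert_{L^2}\le \Vert\delta_m f\Vert_{L^\infty}$ for $p\le 2$ and interpolation for $p\ge 2$) one gets, after tracking the annulus cardinality, a bound of the form $\Vert\delta_m f\Vert_{L^p}\lesssim 2^{m(\frac12-\ldots)}\Vert\delta_m f\Vert_{L^\infty}$ in the regime that forces the constraint $r+\tfrac12-\rho<s$. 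I expect this cardinality/Bernstein bookkeeping to be the main obstacle, i.e.\ isolating exactly where the $\frac12$ enters and showing it is sharp relative to the index constraint.

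The third step is assembly. Combining the first two steps,
\begin{equation}
2^{mr}\Vert\delta_m(\textnormal{Op}(\sigma)f)\Vert_{L^p(\mathbb{T})} \lesssim 2^{mr} 2^{-m\rho} 2^{m(\frac12 - s)} \Vert f\Vert_{B^s_{\infty,\infty}} = 2^{-m(s-r-\frac12+\rho)}\Vert f\Vert_{B^s_{\infty,\infty}}.
\end{equation}
Since $s-r-\tfrac12+\rho>0$ by hypothesis, the sequence $\bigl(2^{mr}\Vert\delta_m(\textnormal{Op}(\sigma)f)\Vert_{L^p}\bigr)_m$ decays geometrically, hence lies in $\ell^q$ for every $0<q<\infty$ with $\ell^q$-norm controlled by $\Vert f\Vert_{B^s_{\infty,\infty}}$ (summing a geometric series), and also in $\ell^\infty$; this gives $\textnormal{Op}(\sigma)f\in B^r_{p,q}(\mathbb{T})$ with the claimed norm bound. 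In the endpoint case $r+\tfrac12-\rho\le s$ the exponent $s-r-\tfrac12+\rho\ge 0$, so the geometric decay degenerates to boundedness of the sequence, which is exactly the statement $\textnormal{Op}(\sigma):B^s_{\infty,\infty}\to B^r_{p,\infty}$. A minor point requiring care throughout is the behaviour of the zero and low frequencies (the annulus containing $\xi=0$), where the $\rho$-condition is only assumed for $\xi\ne 0$; this is handled separately using the $L^\infty(\mathbb{Z})$ bound on $\sigma$ which is implicit, or absorbed into a finite-dimensional correction term.
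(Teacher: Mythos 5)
Your overall architecture (block estimate times block-norm estimate, then geometric summation, with the $q=\infty$ endpoint obtained from mere boundedness of the sequence) matches the shape of the paper's argument and lands on the correct exponent $2^{-m(s-r-\frac12+\rho)}$. However, the two key steps are not correctly identified, and the first one is a genuine gap. The claimed block bound $\Vert \delta_m(\textnormal{Op}(\sigma)f)\Vert_{L^p}\lesssim 2^{-m\rho}\Vert\delta_m f\Vert_{L^p}$ cannot be extracted from the $\rho$-condition \eqref{rho2}: that hypothesis is a pure size condition, with no variation or smoothness in $\xi$, so there is no ``bounded variation at scale $2^m$ after rescaling'' to appeal to. For $p\neq 2$ a merely bounded multiplier on an annulus of $\sim 2^m$ frequencies can have $L^\infty\to L^\infty$ operator norm as large as $\sim 2^{m/2}$ (Rudin--Shapiro-type sign patterns), so this step fails uniformly in $m$. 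A second symptom that something is off: if your Step 1 were true, then combining it with the trivial (and lossless) estimate $\Vert\delta_m f\Vert_{L^p}\le C\Vert\delta_m f\Vert_{L^\infty}\le C2^{-ms}\Vert f\Vert_{B^s_{\infty,\infty}}$ (the last inequality is just the definition of the $B^s_{\infty,\infty}$ norm) would prove boundedness under the weaker condition $r<\rho+s$, with no $\tfrac12$ at all --- which is not the theorem. Correspondingly, your Step 2 hunts for the $\tfrac12$ in the wrong place: on the finite-measure torus the comparison $\Vert\delta_m f\Vert_{L^p}\lesssim\Vert\delta_m f\Vert_{L^\infty}$ costs nothing, and you leave the ``cardinality bookkeeping'' explicitly unresolved.

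The paper's proof locates the $\tfrac12$ elsewhere. One bounds
\begin{equation*}
\Bigl\Vert \sum_{2^m\le|\xi|<2^{m+1}}\sigma(\xi)\widehat f(\xi)e^{ix\xi}\Bigr\Vert_{L^p}\;\lesssim\;\sum_{2^m\le|\xi|<2^{m+1}}|\sigma(\xi)||\widehat f(\xi)|\;\le\;2^{-m\rho}\,\bigl(\#\{2^m\le|\xi|<2^{m+1}\}\bigr)^{1/2}\Bigl(\sum_{2^m\le|\xi|<2^{m+1}}|\widehat f(\xi)|^2\Bigr)^{1/2},
\end{equation*}
so the factor $2^{(m+1)/2}$ is the Cauchy--Schwarz price of passing from $\ell^2$ to $\ell^1$ on the annulus. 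The remaining $\ell^2$ sum is where the H\"older regularity enters, via the classical Bernstein trick: taking $h=2\pi/(3\cdot 2^m)$ one has $|e^{-i\xi h}-1|\ge\sqrt3$ on the annulus, hence by Plancherel
\begin{equation*}
\sum_{2^m\le|\xi|<2^{m+1}}|\widehat f(\xi)|^2\;\le\;\frac13\sum_{\xi\in\mathbb{Z}}|e^{-i\xi h}-1|^2|\widehat f(\xi)|^2\;=\;\frac13\Vert f(\cdot-h)-f\Vert_{L^2}^2\;\lesssim\;2^{-2ms}\Vert f\Vert_{\Lambda^s}^2.
\end{equation*}
Multiplying the three factors gives $2^{-m(\rho+s)+\frac12(m+1)}\Vert f\Vert_{\Lambda^s}$, and your summation/endpoint discussion then goes through verbatim. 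So the fix is to abandon the block-multiplier estimate of Step 1 entirely and instead run the size bound, Cauchy--Schwarz, and the shift-plus-Plancherel argument on the Fourier coefficients; the low-frequency issue you flag at the end is moot since the dyadic norm here only involves $|\xi|\ge 1$.
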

\begin{proof}

Let us consider $f\in C^{\infty}(\mathbb{T}).$ In order to estimate the Besov norm of $\text{Op}(\sigma)f$ we use its dyadic decomposition. First we note that

\begin{equation}\sum_{\xi\in\mathbb{Z}-\{0\}}|\mathscr{F}(\text{Op}(\sigma)f)(\xi)|^2=\sum_{m=0}^{\infty}\sum_{2^m\leq|\xi|<2^{m+1}}|\mathscr{F}(\text{Op}(\sigma)f)(\xi)|^2.
\end{equation}
Now we estimate every dyadic decomposition as follow. If take $h=2\pi/3\cdot 2^m$ and $2^m\leq |\xi|\leq 2^{m+1}$ we have $|e^{-i\xi h}-1|\geq \sqrt{3}.$ Hence we get
\begin{align*} \sum_{2^m\leq|\xi|<2^{m+1}}|\mathscr{F}(\text{Op}(\sigma)f)(\xi)|^2 &\leq \sum_{2^m\leq|\xi|<2^{m+1}}      |e^{-ih\xi}-1|^2 |\mathscr{F}(\text{Op}(\sigma)f)(\xi)|^2\\
&= \sum_{2^m\leq|\xi|<2^{m+1}}      |e^{-ih\xi}-1|^2 |\sigma(\xi)\mathscr{F}(f)(\xi)|^2\\
&\leq \sum_{2^m\leq|\xi|<2^{m+1}}      |e^{-ih\xi}-1|^2 |\xi|^{-2\rho}|\mathscr{F}(f)(\xi)|^2\\
&\lesssim \sum_{2^m\leq|\xi|<2^{m+1}}      |e^{-ih\xi}-1|^2 2^{-2m\rho}|\mathscr{F}(f)(\xi)|^2\\
&\leq 2^{-2m\rho} \sum_{\xi\in\mathbb{Z}}      |e^{-ih\xi}-1|^2 |\mathscr{F}(f)(\xi)|^2.
\end{align*}
On the other hand, Fourier inversion formula guarantees that
\begin{equation}
f(t-h)-f(t)=\sum_{\xi\in\mathbb{Z}}(e^{-i\xi h}-1)(\mathscr{F}f)(\xi)e^{i\xi t}.
\end{equation}
By the Plancherel theorem we conclude that
\begin{align*} \sum_{\xi\in\mathbb{Z}}      |e^{-ih\xi}-1|^2 |\mathscr{F}(f)(\xi)|^2= \Vert f(\cdot -h)-f(\cdot) \Vert^2_{L^{2}(\mathbb{T})}\leq (\frac{2\pi}{3\cdot 2^m})^{2s}\Vert f\Vert^2_{\Lambda^{s}(\mathbb{T})}.
\end{align*}
Hence
\begin{align}\sum_{2^m\leq|\xi|<2^{m+1}}|\mathscr{F}(\text{Op}(\sigma)f)(\xi)|^2 &\leq 2^{-2m\rho}(\frac{2\pi}{3\cdot 2^m})^{2s}\Vert f\Vert^2_{\Lambda^{s}(\mathbb{T})}\\
&\lesssim 2^{-2m(\rho+s)}\Vert f\Vert^2_{\Lambda^{s}}.
\end{align}
By the Cauchy-Schwarz inequality, for all $0<p\leq \infty $ we get
\begin{align*}\left\Vert \sum_{2^m\leq|\xi|<2^{m+1}}\mathscr{F}(\text{Op}(\sigma)f)(\xi)e^{ix\xi}\right\Vert_{L^p(\mathbb{T})} &\leq 2\pi \cdot \sum_{2^m\leq|\xi|<2^{m+1}}|\mathscr{F}(\text{Op}(\sigma)f)(\xi)|\\
&\leq 2\pi \cdot \left(\sum_{2^m\leq|\xi|<2^{m+1}}|\mathscr{F}(\text{Op}(\sigma)f)(\xi)|^2  \right)^{1/2}2^{\frac{1}{2}(m+1)}\\
&\lesssim 2^{-m(\rho+s)+\frac{1}{2}(m+1)} \Vert f\Vert_{\Lambda^{s}}.
\end{align*}
Now, we consider the Besov-norm of $\text{Op}(\sigma)f$ if $0<p,q<\infty:$ in fact, we have
\begin{align*}
\Vert\text{Op}(\sigma)f \Vert_{B^{r}_{p,q}(\mathbb{T})}&:=\left( \sum_{m=0}^{\infty} 2^{mrq}\left\Vert \sum_{2^m\leq |\xi|< 2^{m+1}}  e^{ix\xi}\mathscr{F}((\text{Op}(\sigma)f)(\xi)\right\Vert^q_{L^p(\mathbb{T})}\right)^{\frac{1}{q}}\\
&\leq \left( \sum_{m=0}^{\infty} 2^{mrq} 2^{-mq(\rho+s)+\frac{1}{2}q(m+1)} \Vert f\Vert^q_{\Lambda^{s}}   \right)^{\frac{1}{q}}\\
& \lesssim (\sum_{m=0}^{\infty} 2^{mrq} 2^{-mq(\rho+s)+\frac{1}{2}q(m+1)})^{1/q} \Vert f\Vert_{\Lambda^{s}}.
\end{align*}
From the condition $r+\frac{1}{2}-\rho<s\leq 1$ we obtain
\begin{equation}\sum_{m=0}^{\infty} 2^{mrq} 2^{-mq(\rho+s)+\frac{1}{2}q(m+1)}=2^{\frac{1}{2}}\sum_{m=0}^{\infty} 2^{mq(r-\rho-s+\frac{1}{2})}<\infty.
\end{equation}
Hence $\Vert\text{Op}(\sigma)f \Vert_{B^{r}_{p,q}(\mathbb{T})}\lesssim \Vert f\Vert_{\Lambda^{s}} $ which shows the boundedness of $\text{Op}(\sigma)$ when $q<\infty.$ Now we consider the case $q=\infty.$ In fact, if we assume $r-\rho+\frac{1}{2}\leq s\leq 1,$ we have
\begin{align*}
\Vert\text{Op}(\sigma)f \Vert_{B^{r}_{p,\infty}(\mathbb{T})}&:= \sup_{0\leq m< \infty} 2^{mr}\left\Vert \sum_{2^m\leq |\xi|< 2^{m+1}}  e^{ix\xi}\mathscr{F}((\text{Op}(\sigma)f)(\xi)\right\Vert_{L^p(\mathbb{T})}   \\
&\lesssim \sup_{0\leq m< \infty} 2^{mr} 2^{-m(\rho+s)+\frac{1}{2}(m+1)} \Vert f\Vert_{\Lambda^{s}}  \\
&\lesssim \Vert f\Vert_{\Lambda^{s}}.  
\end{align*}
With above inequality we end the proof.
\end{proof}

\begin{theorem}\label{non} Let us consider $0\leq \rho\leq 1\leq p<\infty,$ $0<q<\infty$ and $r+\frac{1}{2}-\rho<s\leq 1.$ If $\sigma(x,\xi)$ satisfies
\begin{equation}|\partial_{x}^\beta\sigma(x,\xi)|\leq C_{\beta}|\xi|^{-\rho},\,\, |\beta|\leq[1/p]+1,\xi\neq 0,
\end{equation}
then the pseudo-differential operator $\textnormal{Op}(\sigma):B^{s}_{\infty,\infty}(\mathbb{T})\rightarrow B^r_{p,q}(\mathbb{T})$ is a bounded operator.
\end{theorem}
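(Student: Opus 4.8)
The plan is to reduce the pseudo-differential operator $\mathrm{Op}(\sigma)$ to a superposition of Fourier multipliers, each of which falls under Theorem~\ref{main}, by expanding the $x$-dependence of $\sigma$ in a Fourier series. Writing $\sigma(x,\xi)=\sum_{\eta\in\mathbb{Z}}\widehat{\sigma}(\eta,\xi)e^{ix\eta}$, where $\widehat{\sigma}(\eta,\xi)=\frac{1}{2\pi}\int_{\mathbb{T}}\sigma(x,\xi)e^{-ix\eta}\,dx$ denotes the Fourier coefficient in $x$, one checks formally that
\begin{equation}
\mathrm{Op}(\sigma)f(x)=\sum_{\eta\in\mathbb{Z}}e^{ix\eta}\,\mathrm{Op}(\widehat{\sigma}(\eta,\cdot))f(x),
\end{equation}
so that $\mathrm{Op}(\sigma)$ is an average of the multipliers $\mathrm{Op}(\widehat{\sigma}(\eta,\cdot))$ twisted by modulations $e^{ix\eta}$. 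The first step is therefore to record that multiplication by $e^{ix\eta}$ is bounded on $B^{r}_{p,q}(\mathbb{T})$ with operator norm growing at most polynomially in $\langle\eta\rangle$; this is a standard fact for Besov spaces (the modulation shifts frequency by $\eta$, affecting only a bounded number of adjacent dyadic blocks), and it costs a factor $\lesssim \langle\eta\rangle^{|r|+1}$ or so.

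The second step is to use the hypothesis $|\partial_x^\beta\sigma(x,\xi)|\le C_\beta|\xi|^{-\rho}$ for $|\beta|\le[1/p]+1$ to control the Fourier coefficients in $\eta$. Integration by parts in $x$ gives $|\widehat{\sigma}(\eta,\xi)|\le C_\beta\,\langle\eta\rangle^{-|\beta|}|\xi|^{-\rho}$ for every admissible $\beta$, hence
\begin{equation}
|\widehat{\sigma}(\eta,\xi)|\lesssim \langle\eta\rangle^{-([1/p]+1)}\,|\xi|^{-\rho},\qquad \xi\neq 0.
\end{equation}
Thus for each fixed $\eta$ the symbol $\xi\mapsto\widehat{\sigma}(\eta,\xi)$ satisfies the $\rho$-condition \eqref{rho2} with constant $C\lesssim\langle\eta\rangle^{-([1/p]+1)}$, and Theorem~\ref{main} (applicable since $r+\tfrac12-\rho<s\le1$, $0<p\le\infty$, $0<q<\infty$) yields
\begin{equation}
\big\Vert \mathrm{Op}(\widehat{\sigma}(\eta,\cdot))f\big\Vert_{B^r_{p,q}(\mathbb{T})}\lesssim \langle\eta\rangle^{-([1/p]+1)}\,\Vert f\Vert_{\Lambda^s(\mathbb{T})}.
\end{equation}

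The final step is to sum over $\eta$. Using the triangle inequality in $B^r_{p,q}$ (valid since $p,q\ge 1$ here, as $1\le p<\infty$ and we may assume $q\ge1$, or otherwise use the quasi-triangle inequality with the standard $\ell^{\min(1,q)}$ modification), combined with the modulation bound from Step~1, gives
\begin{equation}
\Vert\mathrm{Op}(\sigma)f\Vert_{B^r_{p,q}}\lesssim \sum_{\eta\in\mathbb{Z}}\langle\eta\rangle^{|r|+1}\,\langle\eta\rangle^{-([1/p]+1)}\,\Vert f\Vert_{\Lambda^s}.
\end{equation}
For this series to converge one needs $[1/p]+1$ to exceed $|r|+1+1=|r|+2$; since the statement as given imposes only $|\beta|\le[1/p]+1$, the clean convergence is exactly the point where the regularity index $[1/p]+1$ must be calibrated against the modulation loss, and this is the main obstacle — one wants to show the modulation factor can be taken as small as $\langle\eta\rangle^{\epsilon}$ (or even that twisting by $e^{ix\eta}$ followed by the frequency localization of $\mathrm{Op}(\widehat\sigma(\eta,\cdot))f$ only touches finitely many blocks, giving a bound independent of $\eta$ up to a single power), so that the exponent $[1/p]+1$ suffices. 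I expect the delicate bookkeeping to be in proving that $e^{ix\eta}\mathrm{Op}(\widehat\sigma(\eta,\cdot))f$ has its Besov norm controlled by $\langle\eta\rangle$ to a power strictly less than $[1/p]+1$, uniformly in the other parameters; once that is in hand the sum over $\eta$ converges and the proof concludes. An alternative that avoids the modulation loss altogether is to invoke the Sobolev embedding $H^{[1/p]+1}(\mathbb{T})\hookrightarrow L^\infty(\mathbb{T})$ (or the $B^r_{p,q}$-analogue) directly on the $x$-variable inside the definition of the Besov norm of $\mathrm{Op}(\sigma)f$, bounding the $L^p_x$ norm of each dyadic block by the $L^\infty_x$ norm controlled through $[1/p]+1$ many $x$-derivatives of $\sigma$, and then running the dyadic estimate of Theorem~\ref{main} blockwise; this is the route suggested by the reference to the Ruzhansky–Wirth method in the text, and I would present the argument along those lines.
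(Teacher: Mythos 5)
Your primary route (expanding $\sigma(x,\xi)=\sum_{\eta}\widehat{\sigma}(\eta,\xi)e^{ix\eta}$ and summing modulated multipliers) has a gap that you yourself flag but do not close, and under the stated hypothesis it cannot be closed. For $1<p<\infty$ one has $[1/p]+1=1$, so integration by parts yields only $|\widehat{\sigma}(\eta,\xi)|\lesssim\langle\eta\rangle^{-1}|\xi|^{-\rho}$, and $\sum_{\eta\in\mathbb{Z}}\langle\eta\rangle^{-1}$ already diverges \emph{before} you pay any modulation cost on $B^{r}_{p,q}$; for $p=1$ you get $\langle\eta\rangle^{-2}$, which is summable but is then destroyed by the $\langle\eta\rangle^{|r|+1}$ loss you estimate for multiplication by $e^{ix\eta}$. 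So the Fourier-series decomposition in $x$ is calibrated wrongly against the hypothesis: it would need roughly $[1/p]+3+|r|$ derivatives in $x$, not $[1/p]+1$. The sharper bound you hope for (that the modulation only touches finitely many dyadic blocks uniformly in $\eta$) is not available, because the frequency support of $e^{ix\eta}\,\mathrm{Op}(\widehat{\sigma}(\eta,\cdot))f$ is shifted by $\eta$ and therefore lands in dyadic blocks whose index depends on $\eta$ in an uncontrolled way relative to $r$.

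The one-sentence alternative at the end of your proposal is the correct argument, and it is the one the paper uses; it deserves to be the whole proof rather than a footnote. Write $\mathrm{Op}(\sigma)f(x)=A_{x}f(x)$ where $A_{z}=\mathrm{Op}(\sigma(z,\cdot))$ is, for each frozen $z\in\mathbb{T}$, a genuine Fourier multiplier. Inside the $L^{p}_{x}$ norm of each dyadic block one bounds the value at $z=y$ by $\sup_{z}$, and then controls $\sup_{z}$ via the Sobolev embedding $W^{[1/p]+1,p}(\mathbb{T})\hookrightarrow L^{\infty}(\mathbb{T})$ in the $z$-variable, which converts the problem into estimating $\int_{\mathbb{T}}\Vert\cdot\Vert_{L^{p}}^{p}\,dz$ for the multipliers $\mathrm{Op}(\partial_{z}^{\beta}\sigma(z,\cdot))$ with $|\beta|\le[1/p]+1$. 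Each of these satisfies the $\rho$-condition uniformly in $z$ by hypothesis, so Theorem~\ref{main} applies blockwise and the Besov norm is bounded by $\sup_{|\beta|\le[1/p]+1,\,z}\Vert\mathrm{Op}(\partial_{z}^{\beta}\sigma(z,\cdot))\Vert_{B(\Lambda^{s},B^{r}_{p,q})}\Vert f\Vert_{\Lambda^{s}}$. This is exactly why the threshold $[1/p]+1$ appears: it is the order needed for the one-dimensional Sobolev embedding into $L^{\infty}$, and there is no summation over $x$-frequencies and hence no loss to absorb. Please redo the write-up along these lines.
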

\begin{proof}
Let $f\in C^{\infty}(\mathbb{T}).$ To prove this theorem we write 
\begin{align*} \textnormal{Op}(\sigma)f(x)&=\sum_{\xi\in\mathbb{Z}}e^{ix\xi}\sigma(x,\xi)\widehat{f}(\xi)=\int_{\mathbb{T}}\left(\sum_{\xi\in\mathbb{Z}} e^{i(x-y)\xi}\sigma(x,\xi)\right)f(y)dy\\
&=\int_{\mathbb{T}}\left(\sum_{\xi\in\mathbb{Z}} e^{iy\xi}\sigma(x,\xi)\right)f(x-y)dy.
\end{align*}
Hence, $\text{Op}(\sigma)f(x)=(\varkappa(x,\cdot)\ast f)(x),$ where 
\begin{equation} \varkappa(z,y)=\sum_{\xi\in\mathbb{Z}} e^{iy\xi}\sigma(z,\xi).
\end{equation}
Moreover, if we define $A_{z}f(x)=(\varkappa(z,\cdot)\ast f)(x)$ for every $z\in\mathbb{T},$ we have $$A_{x}f(x)=\text{Op}(\sigma)f(x),\,\,\,x\in\mathbb{T}.$$ For all $0\leq |\beta|\leq [1/p]+1$ we have $\partial^{\beta}_{z}A_{z}f(x)=\text{Op}(\partial_{z}^{\beta}\sigma(z,\cdot))f(x).$
 if $1\leq p<\infty$ we have
\begin{align*}
\Vert \sum_{2^m\leq |\xi|<2^{m+1}} e^{ix\xi}\mathscr{F}(\text{Op}(\sigma)f)(\xi) \Vert^p_{L^{p}}&:=\int_{\mathbb{T}}\left|   \sum_{2^m\leq |\xi|<2^{m+1}} e^{ix\xi}\int_{\mathbb{T}}e^{-iy\xi}\text{Op}(\sigma)f(y)dy \right|^pdx\\
&=\int_{\mathbb{T}}\left|   \sum_{2^m\leq |\xi|<2^{m+1}} e^{ix\xi}\int_{\mathbb{T}}e^{-iy\xi}(A_{y}f)(y)dy \right|^pdx\\ 
&\leq \sup_{z\in\mathbb{T}}\int_{\mathbb{ T}}\left|   \sum_{2^m\leq |\xi|<2^{m+1}} e^{ix\xi}\int_{\mathbb{T}}e^{-iy\xi}(A_{z}f)(y)dy \right|^pdx.\\ 
\end{align*}
By the Sobolev embedding theorem we have

\begin{align*} \sup_{z\in\mathbb{T}}\int_{\mathbb{T}} &\left|   \sum_{2^m\leq |\xi|<2^{m+1}} e^{ix\xi}  \int_{\mathbb{T}}e^{-iy\xi}(A_{z}f)(y)dy \right|^pdx \\
&\lesssim \sum_{|\beta|\leq [1/p]+1}\int_{\mathbb{T}}\int_{\mathbb{T}} \left| \partial^{\beta}_{z}    \sum_{2^m\leq |\xi|<2^{m+1}} e^{ix\xi}  \int_{\mathbb{T}}e^{-iy\xi}(A_{z}f)(y)dy  \right|^pdz\,dx   \\ 
&\leq \sum_{|\beta|\leq [1/p]+1} \int_{\mathbb{T}} \int_{\mathbb{T}}  \left|    \sum_{2^m\leq |\xi|<2^{m+1}} e^{ix\xi}  \mathscr{F}((\partial^{\beta}_{z} A_{z}f))(\xi)  \right| ^pdxdz\\
&= \sum_{|\beta|\leq [1/p]+1} \int_{\mathbb{T}} \Vert    \sum_{2^m\leq |\xi|<2^{m+1}} e^{ix\xi}  \mathscr{F}((\partial^{\beta}_{z} A_{z}f))(\xi)  \Vert^p_{L^p}dz\\
&\lesssim \sup_{|\beta|\leq [1/p]+1}\int_{\mathbb{T}} \Vert    \sum_{2^m\leq |\xi|<2^{m+1}} e^{ix\xi}  \mathscr{F}((\partial^{\beta}_{z} A_{z}f))(\xi)  \Vert^p_{L^p}dz
\end{align*}
Hence,
\begin{align*} \Vert \sum_{2^m\leq |\xi| <2^{m+1}}& e^{ix\xi}\mathscr{F}(\text{Op}(\sigma)f)(\xi) \Vert_{L^{p}}\\
&\lesssim \sup_{|\beta|\leq [\frac{1}{p}]+1} \left(\int_{\mathbb{T}}\Vert    \sum_{2^m\leq |\xi|<2^{m+1}} e^{ix\xi}  \mathscr{F}((\text{Op}(\partial^{\beta}_{z}\sigma(z,\cdot))f))(\xi)  \Vert^{p}_{L^p}dz\right)^{1/p}.
\end{align*}
Thus, considering $0<q<\infty$ we obtain

\begin{align*}
&\Vert\text{Op}(\sigma)f \Vert_{B^{r}_{p,q}(\mathbb{T})}\\
&=\left( \sum_{m=0}^{\infty} 2^{mrq}\left\Vert \sum_{2^m\leq |\xi|< 2^{m+1}}  e^{ix\xi}\mathscr{F}((\text{Op}(\sigma)f)(\xi)\right\Vert^q_{L^p(\mathbb{T})}\right)^{\frac{1}{q}}\\
&\lesssim \left( \sum_{m=0}^{\infty} 2^{mrq} \sup_{|\beta|\leq [\frac{1}{p}]+1} \left(\int_{\mathbb{T}}\Vert    \sum_{2^m\leq |\xi|<2^{m+1}} e^{ix\xi}  \mathscr{F}((\text{Op}(\partial^{\beta}_{z}\sigma(z,\cdot))f))(\xi)  \Vert^{p}_{L^p}dz\right)^{q/p}   \right)^{\frac{1}{q}}\\
&\lesssim \left( \sum_{m=0}^{\infty} 2^{mrq} \sup_{|\beta|\leq [\frac{1}{p}]+1} \left(\int_{\mathbb{T}}\sup_{z\in\mathbb{T}}\Vert    \sum_{2^m\leq |\xi|<2^{m+1}} e^{ix\xi}  \mathscr{F}((\text{Op}(\partial^{\beta}_{z}\sigma(z,\cdot))f))(\xi)  \Vert^{p}_{L^p}dz\right)^{q/p}   \right)^{\frac{1}{q}}\\
&\lesssim \left( \sum_{m=0}^{\infty} 2^{mrq} \sup_{|\beta|\leq [\frac{1}{p}]+1} \left(\sup_{z\in\mathbb{T}}\Vert    \sum_{2^m\leq |\xi|<2^{m+1}} e^{ix\xi}  \mathscr{F}((\text{Op}(\partial^{\beta}_{z}\sigma(z,\cdot))f))(\xi)  \Vert^{p}_{L^p}\right)^{q/p}   \right)^{\frac{1}{q}}.
\end{align*}
Hence, we can write (by using the Fatou's Lemma)
\begin{align*}
\Vert\text{Op}(\sigma)f \Vert_{B^{r}_{p,q}(\mathbb{T})} &\lesssim  \left( \sum_{m=0}^{\infty} 2^{mrq} \sup_{|\beta|\leq [\frac{1}{p}]+1,z\in\mathbb{T}} \Vert    \sum_{2^m\leq |\xi|<2^{m+1}} e^{ix\xi}  \mathscr{F}((\text{Op}(\partial^{\beta}_{z}\sigma(z,\cdot))f))(\xi)  \Vert^q_{L^p}   \right)^{\frac{1}{q}}\\
&\lesssim \sup_{|\beta|\leq [\frac{1}{p}]+1,z\in\mathbb{T}}\Vert\text{Op}(\partial^{\beta}_{z}\sigma(z,\cdot))f \Vert_{B^{r}_{p,q}(\mathbb{T})}  \\
&\leq \left[\sup_{|\beta|\leq [\frac{1}{p}]+1,z\in\mathbb{T}}  \Vert \text{Op}(\partial^{\beta}_{z}\sigma(z,\cdot)) \Vert_{B(\Lambda^s,B^r_{p,q})}\right]\Vert f \Vert_{\Lambda^{s}}.
\end{align*} With the last inequality we end the proof.
\end{proof}

\begin{remark}In order to find connection of H\"older-Besov estimates and $L^p$-estimates, in the next theorem we endowed a H\"older space of degree $0<s<1$ with the norm
\begin{equation}
\Vert f \Vert_{B^{s}_{\infty,\infty,p}}:=\vert f \vert_{\Lambda^s}+\Vert f\Vert_{L^p},
\end{equation}
where $1<p<\infty.$ 
\end{remark}
\begin{theorem}\label{main22} Let $0\leq \rho\leq 1,$ and $\sigma(x,\xi)$ be a measurable function satisfying
\begin{equation}\label{co}
|\partial_{x}^\beta \Delta_{\xi}^{\alpha}\sigma(x,\xi)|\leq C_{\beta}|\xi|^{-\rho-|\alpha|},\,\,\,\,|\beta|\leq [1/p]+1, |\alpha|\leq 2.\,\,\xi\neq 0.
\end{equation}
Then $\textnormal{Op}(\sigma):B^{s}_{\infty,\infty,p}(\mathbb{T})\rightarrow B^{r}_{\infty,\infty,p}(\mathbb{T})$ is a bounded operator for all $r+\frac{1}{2}-\rho\leq s\leq 1.$  
\end{theorem}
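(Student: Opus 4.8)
The plan is to split the target norm as $\Vert \textnormal{Op}(\sigma)f\Vert_{B^r_{\infty,\infty,p}}=|\textnormal{Op}(\sigma)f|_{\Lambda^r}+\Vert \textnormal{Op}(\sigma)f\Vert_{L^p}$, to bound the H\"older seminorm by $\Vert f\Vert_{\Lambda^s}$ and the $L^p$-norm by $\Vert f\Vert_{L^p}$, and then to recombine the two by means of the elementary torus inequality $\Vert g\Vert_{L^\infty}\lesssim |g|_{\Lambda^s}+\Vert g\Vert_{L^p}$ (if $|g(x_0)|=\Vert g\Vert_{L^\infty}$ then $|g|\geq\tfrac12|g(x_0)|$ on a ball around $x_0$ of radius $\min\{(|g(x_0)|/2|g|_{\Lambda^s})^{1/s},1\}$, and one compares with $\Vert g\Vert_{L^p}$), which in particular shows that $\Vert\cdot\Vert_{\Lambda^s}$ and $\Vert\cdot\Vert_{B^s_{\infty,\infty,p}}$ are equivalent on $\mathbb{T}$. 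As in the proof of Theorem \ref{non} I would use the representation $\textnormal{Op}(\sigma)f(x)=A_xf(x)$, where $A_zf:=\varkappa(z,\cdot)\ast f$ is the Fourier multiplier with symbol $\sigma(z,\cdot)$, so that $\partial_z^\beta A_zf=\textnormal{Op}(\partial_z^\beta\sigma(z,\cdot))f$ for $|\beta|\leq [1/p]+1$. The step I expect to be the real obstacle is the ``diagonal'' $x=z$, which prevents a direct appeal to the Fourier-multiplier estimate of Theorem \ref{main}; this will be handled by a two-term splitting.

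For the H\"older seminorm I would first note that, by \eqref{co} with $\alpha=0$, each frozen symbol $\partial_z^\beta\sigma(z,\cdot)$ satisfies the $\rho$-condition \eqref{rho2} with a constant independent of $z$; since $r+\tfrac12-\rho\leq s\leq 1$, Theorem \ref{main} applied with $p=q=\infty$ (so that the target is $B^r_{\infty,\infty}=\Lambda^r$) gives that $\textnormal{Op}(\partial_z^\beta\sigma(z,\cdot)):\Lambda^s(\mathbb{T})\to\Lambda^r(\mathbb{T})$ is bounded uniformly in $z$, with operator norm controlled (by scaling) by the constant in \eqref{rho2}. Then, for $x_1,x_2\in\mathbb{T}$, I would write
\[
\textnormal{Op}(\sigma)f(x_1)-\textnormal{Op}(\sigma)f(x_2)=\big(A_{x_1}f(x_1)-A_{x_1}f(x_2)\big)+\big((A_{x_1}-A_{x_2})f(x_2)\big).
\]
The first bracket is at most $|A_{x_1}f|_{\Lambda^r}|x_1-x_2|^r\lesssim\Vert f\Vert_{\Lambda^s}|x_1-x_2|^r$ by the uniform bound above. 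For the second, $A_{x_1}-A_{x_2}$ is the Fourier multiplier with symbol $\sigma(x_1,\xi)-\sigma(x_2,\xi)$, and the mean value theorem together with \eqref{co} (the case $|\beta|=1$) shows that this symbol satisfies \eqref{rho2} with constant $\lesssim|x_1-x_2|$; Theorem \ref{main} then yields $|(A_{x_1}-A_{x_2})f(x_2)|\leq\Vert(A_{x_1}-A_{x_2})f\Vert_{\Lambda^r}\lesssim|x_1-x_2|\,\Vert f\Vert_{\Lambda^s}\lesssim |x_1-x_2|^r\Vert f\Vert_{\Lambda^s}$ whenever $|x_1-x_2|\leq 1$, while for the remaining range one uses the crude bound $|\textnormal{Op}(\sigma)f(x_1)-\textnormal{Op}(\sigma)f(x_2)|\leq 2\Vert\textnormal{Op}(\sigma)f\Vert_{L^\infty}\leq 2\sup_z\Vert A_zf\Vert_{\Lambda^r}\lesssim\Vert f\Vert_{\Lambda^s}\lesssim |x_1-x_2|^r\Vert f\Vert_{\Lambda^s}$. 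Hence $|\textnormal{Op}(\sigma)f|_{\Lambda^r}\lesssim\Vert f\Vert_{\Lambda^s}$.

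For the $L^p$-estimate I would exploit that \eqref{co} is exactly a Mikhlin--H\"ormander condition of order $\leq 0$ on $\mathbb{T}\times\mathbb{Z}$: since $\rho\geq 0$, $|\Delta_\xi^\alpha\partial_x^\beta\sigma(x,\xi)|\leq C|\xi|^{-|\alpha|}$ for $|\alpha|\leq 2$ and $|\beta|\leq [1/p]+1$. Freezing $z$ and applying the Sobolev embedding $W^{[1/p]+1,p}(\mathbb{T})\hookrightarrow L^\infty(\mathbb{T})$ exactly as in the proof of Theorem \ref{non}, one bounds $\Vert\textnormal{Op}(\sigma)f\Vert_{L^p}$ by $\sup_{|\beta|\leq[1/p]+1,\,z\in\mathbb{T}}\Vert\textnormal{Op}(\partial_z^\beta\sigma(z,\cdot))f\Vert_{L^p}$; each frozen symbol satisfies the variational Marcinkiewicz condition \eqref{mc} uniformly in $z$ (its $L^\infty(\mathbb{Z})$-norm is $\lesssim 1$ and $\sum_{2^j\leq|\xi|\leq 2^{j+1}}|\Delta_\xi\partial_z^\beta\sigma(z,\xi)|\lesssim\sum_{2^j\leq|\xi|\leq 2^{j+1}}|\xi|^{-\rho-1}\lesssim 2^{-j\rho}\lesssim 1$), so the classical Marcinkiewicz multiplier theorem recalled in the Introduction gives $\Vert\textnormal{Op}(\partial_z^\beta\sigma(z,\cdot))f\Vert_{L^p}\lesssim\Vert f\Vert_{L^p}$ uniformly in $z$ for $1<p<\infty$. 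Therefore $\Vert\textnormal{Op}(\sigma)f\Vert_{L^p}\lesssim\Vert f\Vert_{L^p}$.

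Combining the two estimates, $\Vert\textnormal{Op}(\sigma)f\Vert_{B^r_{\infty,\infty,p}}\lesssim\Vert f\Vert_{\Lambda^s}+\Vert f\Vert_{L^p}\lesssim|f|_{\Lambda^s}+\Vert f\Vert_{L^p}=\Vert f\Vert_{B^s_{\infty,\infty,p}}$, the middle inequality being the norm equivalence from the first paragraph; the density of $C^\infty(\mathbb{T})$ in $B^s_{\infty,\infty,p}(\mathbb{T})$ then yields the bounded extension. The point that makes the H\"older part of the argument work, and which I expect to be the only delicate step, is that the operator-difference $A_{x_1}-A_{x_2}$ is again a Fourier multiplier whose $\rho$-condition constant is of size $|x_1-x_2|$, so that a single $x$-derivative of $\sigma$ suffices to absorb the factor $|x_1-x_2|^r$ coming from the H\"older quotient.
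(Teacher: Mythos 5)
Your proof is correct and, in its key step, takes a genuinely different route from the paper's. For the H\"older seminorm the paper freezes a single parameter: it bounds $|A_{x-h}f(x-h)-A_xf(x)|\,|h|^{-r}$ by $\sup_{z}|A_zf(x-h)-A_zf(x)|\,|h|^{-r}$ and then runs the Sobolev-embedding-in-$z$ reduction of Theorem \ref{non} to the frozen multipliers $\textnormal{Op}(\partial_z^\beta\sigma(z,\cdot))$. Your two-term splitting $\big(A_{x_1}f(x_1)-A_{x_1}f(x_2)\big)+\big((A_{x_1}-A_{x_2})f(x_2)\big)$, together with the mean-value observation that $\sigma(x_1,\cdot)-\sigma(x_2,\cdot)$ satisfies the $\rho$-condition with constant $O(|x_1-x_2|)$, is the classical way to handle the diagonal, and it is in fact more robust: the paper's one-line inequality compares values of $A_zf$ at two \emph{different} frozen parameters and is not valid for arbitrary families (take $A_zf$ constant in its argument but varying in $z$), so your decomposition actually repairs that step rather than merely rephrasing it. For the $L^p$ part the paper cites Theorem 5.2 of \cite{Ruz-3}, whereas you rederive the bound from the variational Marcinkiewicz condition (using only the $|\alpha|\le 1$ portion of \eqref{co}) combined with the same Sobolev-embedding reduction; this is more self-contained and consistent with the discussion in the introduction. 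You also make explicit the elementary inequality $\Vert f\Vert_{L^\infty}\lesssim|f|_{\Lambda^s}+\Vert f\Vert_{L^p}$ needed to pass from the bound by $\Vert f\Vert_{\Lambda^s}$ to one by $\Vert f\Vert_{B^s_{\infty,\infty,p}}$, a point the paper leaves implicit. (Both arguments tacitly ignore the $\xi=0$ Fourier mode, on which the $\rho$-condition gives no information; this is a convention shared with the rest of the paper.)
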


\begin{proof}
We use notation as in the proof of Theorem \ref{non}. If we consider the condition \eqref{co}, in particular, we have
\begin{equation}
|\partial_{x}^\beta \sigma(x,\xi)|\leq C_{\beta}|\xi|^{-\rho},\,\,\,\,|\beta|\leq [1/p]+1, |\alpha|\leq 2.\,\,\xi\neq 0.
\end{equation}
So, by  Theorem \ref{non}, for every $z\in \mathbb{T},$ the operator $\partial^{\beta}_{z}A_{z}=\text{Op}(\partial_{z}^{\beta}\sigma(z,\cdot)):B^{s}_{\infty,\infty}(\mathbb{T})\rightarrow B^{r}_{\infty,\infty}(\mathbb{T})$  extends to  bounded operator. Next, we estimate the H\"older-norm of $\text{Op}(\sigma):$ 
\begin{align*}
\vert \text{Op}(\sigma)f\vert_{\Lambda^{r}(\mathbb{T})} &=\sup_{x,h\in\mathbb{T}}\vert \text{Op}(\sigma)f(x-h)- \text{Op}(\sigma)f(x)\vert |h|^{-r}\\
&=\sup_{x,h\in\mathbb{T}}\vert A_{x-h}f(x-h)- A_{x}f(x)\vert |h|^{-r}\\ 
&\leq \sup_{x,h,z\in\mathbb{T}}\vert A_{z}f(x-h)- A_{z}f(x)\vert |h|^{-r}
\end{align*}
By using the Sobolev embedding Theorem we have that
\begin{align*}
\sup_{x,h,z\in\mathbb{T}}\vert A_{z}f(x-h)- A_{z}f( & x)\vert |h|^{-r}\\
&\leq \sup_{x,h\in\mathbb{T}}\sum_{|\beta|\leq [1/p]+1}\Vert\sup_{z\in\mathbb{T}}| \partial_{z}^{\beta}(A_{z}f(x-h)-A_{z}f(x)) |\,\Vert_{L^p(\mathbb{T})}|h|^{-r}\\
&\leq \sup_{x,h\in\mathbb{T}}\sum_{|\beta|\leq [1/p]+1}\sup_{z\in\mathbb{T}}| \partial_{z}^{\beta}(A_{z}f(x-h)-A_{z}f(x)) ||h|^{-r}\\
&\leq \sum_{|\beta|\leq [1/p]+1} \sup_{z\in\mathbb{T}} \sup_{x,h\in\mathbb{T}} | \partial_{z}^{\beta}(A_{z}f(x-h)-A_{z}f(x)) ||h|^{-r}\\
&=\sum_{|\beta|\leq [1/p]+1} \sup_{z\in\mathbb{T}} \vert \partial_{z}^{\beta}(A_{z}f) \vert_{\Lambda^r}\\
&\leq \left[\sum_{|\beta|\leq [1/p]+1} \sup_{z\in\mathbb{T}} \Vert \partial_{z}^{\beta}(A_{z}) \Vert_{B(\Lambda^s,\Lambda^r)}\right]\Vert f \Vert_{\Lambda^{s}}.
\end{align*}
On the other hand, by Theorem 5.2 in \cite{Ruz-3} the operator $\text{Op}(\sigma)$ is a $L^p$-bounded operator for all $1<p<\infty.$ Hence, $\Vert \text{Op}(\sigma)f\Vert_{L^p}\leq C\Vert f\Vert_{L^p}.$ With this in mind, we conclude that
\begin{align}
\Vert \text{Op}(\sigma)f\Vert_{B^{r}_{\infty,\infty,p}(\mathbb{T})}:=\vert \text{Op}(\sigma)f\vert_{\Lambda^{r}(\mathbb{T})}+\Vert \text{Op}(\sigma)f\Vert_{L^p(\mathbb{T})}\lesssim \Vert f \Vert_{B^{s}_{\infty,\infty,p}}.
\end{align}
\end{proof}

\begin{theorem}\label{hardy} Let $0<s<1,$ $2\leq p<\infty$ and $0<q<\infty.$ If $r+1-\frac{2}{p}<\rho\leq 1$ and $\sigma(\xi)$ satisfies the $\rho$-condition, then $\textnormal{Op}(\sigma):B^{s}_{\infty,\infty}(\mathbb{T})\rightarrow B^{r}_{p,q}(\mathbb{T})$ is a bounded linear operator. Moreover, if $r+1-\frac{2}{p}\leq \rho\leq 1,$ then $\textnormal{Op}(\sigma):B^{s}_{\infty,\infty}(\mathbb{T})\rightarrow B^{r}_{p,\infty}(\mathbb{T})$ is a linear  bounded operator.
\end{theorem}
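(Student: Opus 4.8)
The plan is to run the same dyadic argument as in Theorem~\ref{main}, but to replace its Cauchy--Schwarz step --- which lost a factor $2^{m/2}$ by controlling an $L^p$-norm through the $\ell^1$-norm of Fourier coefficients --- by the Hardy--Littlewood--Paley (Paley) inequality on $\mathbb{T}$. This is available precisely because $2\le p<\infty$, and it produces only the smaller loss $2^{m(1-2/p)}$. So, first I would fix $f\in C^\infty(\mathbb{T})$ and set, for each $m\ge 0$,
\[
g_m(x):=\sum_{2^m\le|\xi|<2^{m+1}}\sigma(\xi)\widehat f(\xi)\,e^{ix\xi},
\]
which is the $m$-th dyadic block of $\text{Op}(\sigma)f$, so that $\Vert\text{Op}(\sigma)f\Vert_{B^{r}_{p,q}}=\big(\sum_{m\ge0}2^{mrq}\Vert g_m\Vert_{L^p}^q\big)^{1/q}$ (with the usual supremum when $q=\infty$), and everything reduces to a good bound for $\Vert g_m\Vert_{L^p}$.

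The key step is the estimate of $\Vert g_m\Vert_{L^p}$. By the Hardy--Littlewood--Paley inequality, valid for $2\le p<\infty$,
\[
\Vert g_m\Vert_{L^p(\mathbb{T})}^p\le C_p\sum_{2^m\le|\xi|<2^{m+1}}(1+|\xi|)^{p-2}\,|\sigma(\xi)\widehat f(\xi)|^p .
\]
On the block one has $(1+|\xi|)^{p-2}\lesssim 2^{m(p-2)}$ and, by the $\rho$-condition, $|\sigma(\xi)|\le C|\xi|^{-\rho}\lesssim 2^{-m\rho}$; moreover $p\ge 2$ gives $\sum_{2^m\le|\xi|<2^{m+1}}|\widehat f(\xi)|^p\le\big(\sum_{\xi\in\mathbb{Z}}|\widehat f(\xi)|^2\big)^{p/2}$, and by Plancherel together with the embeddings $\Lambda^s(\mathbb{T})\subset L^\infty(\mathbb{T})\subset L^2(\mathbb{T})$ the last factor is $\lesssim\Vert f\Vert_{\Lambda^s}^p$. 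Combining these,
\[
\Vert g_m\Vert_{L^p}\lesssim 2^{m(1-\frac2p-\rho)}\Vert f\Vert_{\Lambda^s}.
\]
(If one wishes to be sharper, one may gain an extra factor $2^{-ms}$ by using, instead of Plancherel, the finite-difference estimate $\sum_{2^m\le|\xi|<2^{m+1}}|\widehat f(\xi)|^2\lesssim 2^{-2ms}\Vert f\Vert_{\Lambda^s}^2$ from the proof of Theorem~\ref{main}; the crude version above already suffices for the stated range.)

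Feeding this into the Besov norm,
\[
\Vert\text{Op}(\sigma)f\Vert_{B^{r}_{p,q}}\lesssim\Big(\sum_{m=0}^\infty 2^{mq(r+1-\frac2p-\rho)}\Big)^{1/q}\Vert f\Vert_{\Lambda^s},
\]
and the geometric series converges exactly when $r+1-\frac2p-\rho<0$, i.e.\ under the hypothesis $\rho>r+1-\frac2p$; this gives the first assertion. For $q=\infty$ the same computation yields $\Vert\text{Op}(\sigma)f\Vert_{B^{r}_{p,\infty}}\lesssim\sup_{m\ge0}2^{m(r+1-\frac2p-\rho)}\Vert f\Vert_{\Lambda^s}$, which is finite as soon as $r+1-\frac2p-\rho\le0$, i.e.\ $\rho\ge r+1-\frac2p$, which is the second assertion. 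The estimate on smooth $f$ then yields, in the usual way, the bounded extension of $\text{Op}(\sigma)$ on $B^{s}_{\infty,\infty}(\mathbb{T})$.

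The only non-elementary ingredient is the Hardy--Littlewood--Paley inequality, so the crux is to have that inequality available on the circle in the range $2\le p<\infty$; its failure for $p<2$ (where the weight $(1+|\xi|)^{p-2}$ decays and the inequality reverses) is exactly the reason this refinement of Theorem~\ref{main} is restricted to $p\ge 2$. Apart from that, the argument is the same dyadic bookkeeping as in Theorem~\ref{main}.
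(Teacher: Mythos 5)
Your proof is correct and follows essentially the same route as the paper: both rest on the Hardy--Littlewood inequality $\Vert g\Vert_{L^p(\mathbb{T})}^p\le C_p\sum_{\xi}(1+|\xi|)^{p-2}|\widehat g(\xi)|^p$ for $2\le p<\infty$ applied to each dyadic block, the bounds $(1+|\xi|)^{p-2}\lesssim 2^{m(p-2)}$ and $|\sigma(\xi)|\lesssim 2^{-m\rho}$ on the block, the estimate $\Vert\widehat f\Vert_{\ell^p(\mathbb{Z})}\lesssim\Vert f\Vert_{\Lambda^s}$, and the resulting geometric series in $m$. Your explicit justification of that last estimate via $\ell^2\subset\ell^p$, Plancherel and $\Lambda^s\subset L^\infty\subset L^2$ is exactly what the paper leaves implicit.
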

\begin{proof}
First, we recall the Hardy-Littlewood inequality on the torus:
If $2\leq p<\infty$ then
\begin{equation}\label{hll} 
\Vert f \Vert_{L^p(\mathbb{T})}\leq \left(   C_{p}\sum_{\xi\in\mathbb{Z}}(1+|\xi|)^{p-2}|\widehat{f}(\xi)|^p  \right)^{1/p}.
\end{equation}
If we denote by $g_{m}(x)$ the function
\begin{equation}g_{m}(x)= \sum_{2^{m}\leq |\xi|<2^{m+1}}  e^{ix\xi}\mathscr{F}(\text{Op}(\sigma)f)(\xi),
\end{equation}
then $g_{m}=\mathscr{F}^{-1}[\chi_{\{2^{m}\leq |\xi|<2^{m+1}\} }\cdot \mathscr{F}(\text{Op}(\sigma)f)(\cdot)].$ By \eqref{hll} we have,
$$ \Vert g_{m} \Vert_{L^p(\mathbb{T})}\leq \left[C_p\cdot\sum_{2^m\leq |\xi|<2^{m+1}}(1+|\xi|)^{p-2}| \mathscr{F}(\text{Op}(\sigma)f)(\xi) |^{p} \right]^{1/p} ,$$
therefore, for $0<q<\infty$ we obtain
\begin{align*} \sum^{\infty}_{m=0}2^{mrq}\Vert \sum_{2^{m}\leq |\xi|<2^{m+1}} & e^{ix\xi}\mathscr{F}(\text{Op}(\sigma)f)(\xi) \Vert^{q}_{L^p(\mathbb{T})}\\
&\lesssim  \sum^{\infty}_{m=0}2^{mrq}\left[ \sum_{2^m\leq |\xi|<2^{m+1}}(1+|\xi|)^{p-2}|^p \mathscr{F}(\text{Op}(\sigma)f)(\xi) | \right]^{q/p}\\
&\lesssim \sum^{\infty}_{m=0}2^{mrq}\left[ \sum_{2^m\leq |\xi|<2^{m+1}}2^{m(p-2)}| \sigma(\xi)\mathscr{F}(f)(\xi) |^p \right]^{q/p}\\
&\lesssim \sum^{\infty}_{m=0}2^{mrq}\left[ 2^{m(p-2)}{2^{-m\rho p}} \right]^{q/p} \Vert \widehat{f}\Vert^{q}_{L^p(\mathbb{Z})}.\\
\end{align*}
Considering that $\Vert \widehat{f}\Vert_{L^p(\mathbb{Z})}\lesssim \Vert f \Vert_{\Lambda^s}$ for every $0<s<1$ and $2\leq p<\infty$ we get
\begin{equation}\sum^{\infty}_{m=0}2^{mrq}\Vert \sum_{2^{m}\leq |\xi|<2^{m+1}}  e^{ix\xi}\mathscr{F}(\text{Op}(\sigma)f)(\xi) \Vert^q_{L^p(\mathbb{T})}\lesssim \sum^{\infty}_{m=0}2^{mrq+m(p-2)\frac{q}{p}-m\rho q}  \Vert f\Vert^{q}_{\Lambda^s}.
\end{equation}
Since $r+1-\frac{2}{p}<\rho$ we get $$ C=\sum^{\infty}_{m=0}2^{mrq+m(p-2)\frac{q}{p}-m\rho q} <\infty. $$
So, $\Vert \text{Op}(\sigma) \Vert_{B^r_{p,q}}\lesssim \Vert f\Vert_{\Lambda^s}.$ Hence, we conclude the boundedness of $$\text{Op}(\sigma):B^{s}_{\infty,\infty}(\mathbb{T})\rightarrow B^{r}_{p,q}(\mathbb{T}).$$ The proof of the boundedness of $\text{Op}(\sigma)$ when $q=\infty$ is analogue.
\end{proof}

We extend Theorem \ref{hardy} to case of non-invariant periodic operators  as follows:

\begin{theorem} Let us consider $2\leq p<\infty,$ $0<q<\infty$ and $r+1-\frac{2}{p}<\rho\leq 1.$ Let $\sigma(x,\xi)$ be a symbol satisfying
\begin{equation}|\partial^{\beta}_{x}\sigma(x,\xi)|\leq C_{\beta}|\xi|^{-\rho}, \,\,|\beta|\leq[1/p]+1,\,\,\xi\neq 0.
\end{equation} Then $\textnormal{Op}(\sigma)$ is a bounded operator from $B^{s}_{\infty,\infty}$ into $B^{r}_{p,q}.$
\end{theorem}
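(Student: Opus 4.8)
The plan is to repeat, almost verbatim, the freezing-and-Sobolev-embedding argument of Theorem \ref{non}, but feeding it with Theorem \ref{hardy} instead of Theorem \ref{main} at the final step. For $f\in C^{\infty}(\mathbb{T})$ I would first write
\[
\textnormal{Op}(\sigma)f(x)=\sum_{\xi\in\mathbb{Z}}e^{ix\xi}\sigma(x,\xi)\widehat{f}(\xi)=\int_{\mathbb{T}}\varkappa(x,y)f(x-y)\,dy=(\varkappa(x,\cdot)\ast f)(x),
\]
where $\varkappa(z,y)=\sum_{\xi\in\mathbb{Z}}e^{iy\xi}\sigma(z,\xi)$, and set $A_{z}f(x)=(\varkappa(z,\cdot)\ast f)(x)$, so that $A_{x}f(x)=\textnormal{Op}(\sigma)f(x)$ and, for every multi-index $|\beta|\leq[1/p]+1$, one has $\partial_{z}^{\beta}A_{z}f(x)=\textnormal{Op}(\partial_{z}^{\beta}\sigma(z,\cdot))f(x)$.

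Next I would estimate each dyadic block. Fixing $m$ and regarding the $m$-th dyadic piece of $\textnormal{Op}(\sigma)f$ as the diagonal value $z=x$ of a function of $(x,z)$, the Sobolev embedding theorem in the variable $z\in\mathbb{T}$ gives
\[
\Big\Vert \sum_{2^m\leq|\xi|<2^{m+1}} e^{ix\xi}\,\mathscr{F}(\textnormal{Op}(\sigma)f)(\xi)\Big\Vert_{L^p(\mathbb{T})}\lesssim \sup_{|\beta|\leq[1/p]+1}\left(\int_{\mathbb{T}}\Big\Vert \sum_{2^m\leq|\xi|<2^{m+1}} e^{ix\xi}\,\mathscr{F}\big(\textnormal{Op}(\partial_{z}^{\beta}\sigma(z,\cdot))f\big)(\xi)\Big\Vert_{L^p(\mathbb{T})}^{p}\,dz\right)^{1/p},
\]
exactly as in the proof of Theorem \ref{non}. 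Raising to the power $q$, summing in $m$ with weights $2^{mrq}$, bounding the $z$-integral of the $z$-dependent $L^p_x$-norm by its supremum over $z$, and using Fatou's lemma to move the $m$-sum inside the supremum, I would arrive at
\[
\Vert\textnormal{Op}(\sigma)f\Vert_{B^{r}_{p,q}(\mathbb{T})}\lesssim \sup_{|\beta|\leq[1/p]+1,\;z\in\mathbb{T}}\Vert\textnormal{Op}(\partial_{z}^{\beta}\sigma(z,\cdot))f\Vert_{B^{r}_{p,q}(\mathbb{T})}.
\]

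Finally, for each fixed $z\in\mathbb{T}$ and each $\beta$ with $|\beta|\leq[1/p]+1$, the sequence $\xi\mapsto\partial_{z}^{\beta}\sigma(z,\xi)$ is a Fourier-multiplier symbol satisfying the $\rho$-condition $|\partial_{z}^{\beta}\sigma(z,\xi)|\leq C_{\beta}|\xi|^{-\rho}$ with $C_{\beta}$ independent of $z$; since the hypotheses $2\leq p<\infty$, $0<q<\infty$, $r+1-\frac{2}{p}<\rho\leq1$ are precisely those of Theorem \ref{hardy}, that theorem yields the boundedness of $\textnormal{Op}(\partial_{z}^{\beta}\sigma(z,\cdot)):B^{s}_{\infty,\infty}(\mathbb{T})\to B^{r}_{p,q}(\mathbb{T})$ for $0<s<1$, with operator norm bounded by a constant depending only on $C_{\beta},p,q,r,\rho$, hence uniformly in $z$. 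Combining the two displays gives $\Vert\textnormal{Op}(\sigma)f\Vert_{B^{r}_{p,q}}\lesssim\Vert f\Vert_{\Lambda^{s}}$ on the dense subspace $C^{\infty}(\mathbb{T})$, and the theorem follows by density. The main obstacle, as in Theorem \ref{non}, is purely the bookkeeping in interchanging the supremum over the frozen parameter $z$ with the $L^p_x$-norm and the $\ell^q_m$-summation: one must keep the Sobolev-embedding constant independent of $m$ and $f$, and justify (via Fatou or monotone convergence on partial $m$-sums) pulling the $z$-supremum outside the Besov quasi-norm. The convolution structure, the identity $\partial_{z}^{\beta}A_{z}=\textnormal{Op}(\partial_{z}^{\beta}\sigma(z,\cdot))$, and the uniform-in-$z$ application of Theorem \ref{hardy} are then routine.
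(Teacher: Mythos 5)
Your proposal is exactly the argument the paper intends: for this statement the paper's proof consists of the single line ``The proof of this theorem is similar to the proof of Theorem \ref{non},'' and what you write out --- the kernel/freezing decomposition $A_z$, the Sobolev embedding in the frozen variable, pulling the $z$-supremum through the Besov quasi-norm via Fatou, and then invoking the multiplier result uniformly in $z$ --- is precisely that proof with Theorem \ref{hardy} substituted for Theorem \ref{main} in the final step. So the proposal is correct in the same sense and to the same degree of rigor as the paper's own argument, and it usefully makes explicit the steps the paper leaves implicit.
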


\begin{proof} The proof of this theorem is similar to the proof of Theorem \ref{non}.
\end{proof}

Now, we prove results concerning H\"older-Triebel boundedness of Fourier multipliers.

\begin{theorem}\label{trie} Let us consider $0<q<\infty,$ $0<p\leq \infty,$ $r<\rho\leq 1$ and $\frac{1}{2}<s\leq 1.$ Then $\textnormal{Op}(\sigma):B^{s}_{\infty,\infty}(\mathbb{T})\rightarrow F^{r}_{p,q}(\mathbb{T})$ is a bounded operator if we consider that $\sigma(\xi)$ satisfies the $\rho$-condition. If we assume $r\leq \rho\leq 1,$ $\frac{1}{2}<s\leq1$ and $q=\infty$ then $\textnormal{Op}(\sigma):B^{s}_{\infty,\infty}(\mathbb{T})\rightarrow F^{r}_{p,\infty}(\mathbb{T}),$  is a bounded operator.
\end{theorem}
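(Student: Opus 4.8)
The plan is to follow the proof of Theorem \ref{main} almost verbatim; the only new observation is that, since $\mathbb{T}$ is compact, the Triebel--Lizorkin norm is controlled by a uniform-in-$x$ estimate on each Littlewood--Paley block, so nothing is lost by passing from $B^r_{p,q}$ to $F^r_{p,q}$. Let $f\in C^{\infty}(\mathbb{T})$ and put $g_m(x)=\sum_{2^m\le|\xi|<2^{m+1}}e^{ix\xi}\,\mathscr{F}(\text{Op}(\sigma)f)(\xi)$. The Cauchy--Schwarz step already performed in the proof of Theorem \ref{main} shows that, for every $x\in\mathbb{T}$,
\[
|g_m(x)|\ \le\ \sum_{2^m\le|\xi|<2^{m+1}}|\mathscr{F}(\text{Op}(\sigma)f)(\xi)|\ \lesssim\ 2^{-m(\rho+s)+\frac12(m+1)}\Vert f\Vert_{\Lambda^s}\ \lesssim\ 2^{m(\frac12-\rho-s)}\Vert f\Vert_{\Lambda^s},
\]
with the implied constants independent of $x$ and $m$.

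First I would insert this pointwise bound into the Triebel--Lizorkin norm. For $0<q<\infty$ and every $x\in\mathbb{T}$,
\[
\Bigl(\sum_{m=0}^{\infty}2^{mrq}|g_m(x)|^q\Bigr)^{1/q}\ \lesssim\ \Bigl(\sum_{m=0}^{\infty}2^{m(r+\frac12-\rho-s)q}\Bigr)^{1/q}\Vert f\Vert_{\Lambda^s},
\]
and the hypotheses $r<\rho$ and $\tfrac12<s$ give $r+\tfrac12-\rho-s<0$, so this geometric series converges to a finite constant. Since the resulting bound does not depend on $x$ and $\mathbb{T}$ has finite measure, taking the $L^p(\mathbb{T})$-norm — valid for every $0<p\le\infty$, as $\Vert 1\Vert_{L^p(\mathbb{T})}<\infty$ — yields $\Vert\text{Op}(\sigma)f\Vert_{F^r_{p,q}(\mathbb{T})}\lesssim\Vert f\Vert_{\Lambda^s}$. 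For the endpoint $q=\infty$ one replaces the $\ell^q$-sum by $\sup_m 2^{mr}|g_m(x)|\lesssim\sup_m 2^{m(r+\frac12-\rho-s)}\Vert f\Vert_{\Lambda^s}$, which is $\lesssim\Vert f\Vert_{\Lambda^s}$ precisely when $r+\tfrac12-\rho-s\le0$; under $\tfrac12<s$ this is exactly the condition $r\le\rho$ in the statement. Taking the $L^p$-norm again finishes the estimate for $f\in C^{\infty}(\mathbb{T})$.

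I do not anticipate a genuine obstacle: everything reduces to the block estimates already established for Theorem \ref{main}. The only points that need care are bookkeeping the exponents so that the convergence condition $r+\tfrac12-\rho-s<0$ (resp. $\le 0$) matches exactly the index conditions $r<\rho$, $\tfrac12<s$ (resp. $r\le\rho$, $\tfrac12<s$), and noting that it is the compactness of $\mathbb{T}$ — through $\Vert 1\Vert_{L^p(\mathbb{T})}<\infty$ for all $0<p\le\infty$ — that allows the passage from the uniform block bound to the $L^p$-valued statement with no extra argument. This is the structural reason the Triebel--Lizorkin version of Theorem \ref{main} comes essentially for free.
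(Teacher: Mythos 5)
Your proof is correct, but it follows a genuinely different route from the paper's. The paper's proof of Theorem~\ref{trie} never touches the Plancherel/H\"older block estimate of Theorem~\ref{main}: it bounds $|g_m(x)|\le\sum_{2^m\le|\xi|<2^{m+1}}|\sigma(\xi)\widehat f(\xi)|\lesssim 2^{-m\rho}\sum_\xi|\widehat f(\xi)|$, interchanges the sums by Minkowski's inequality, and then invokes Bernstein's theorem, $\Vert\widehat f\Vert_{L^1(\mathbb{Z})}\lesssim\Vert f\Vert_{\Lambda^s}$ for $\tfrac12<s\le 1$ --- which is exactly where the hypothesis $s>\tfrac12$ enters, and which is the step that later generalizes (via Lemma~\ref{general}) to Theorem~\ref{trie2}. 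You instead recycle the $\ell^2$ dyadic-block estimate $\sum_{2^m\le|\xi|<2^{m+1}}|\mathscr{F}(\text{Op}(\sigma)f)(\xi)|^2\lesssim 2^{-2m(\rho+s)}\Vert f\Vert^2_{\Lambda^s}$ together with Cauchy--Schwarz, paying a factor $2^{m/2}$; this is precisely the argument the paper uses for Theorem~\ref{min}, and your observation that the resulting bound is uniform in $x$ so that the $F^r_{p,q}$-norm costs nothing extra on the compact torus is the same observation the paper makes there. What your route buys is a sharper conclusion: you only need $r+\tfrac12-\rho<s$ (resp.\ $\le$), which is strictly weaker than the pair of hypotheses $r<\rho$, $s>\tfrac12$, so Theorem~\ref{trie} becomes a corollary of Theorem~\ref{min}. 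One small inaccuracy: for $q=\infty$ the convergence condition $r+\tfrac12-\rho-s\le 0$ is \emph{implied by}, not ``exactly,'' the hypotheses $r\le\rho$ and $s>\tfrac12$; this does not affect the validity of the proof, since implication in that direction is all you need.
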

\begin{proof}
First we consider the case of $0<q<\infty,$ $0<p\leq \infty$ and $r<\rho.$ By the definition of Triebel-Lizorkin norm, we have
\begin{align*}
\Vert \text{Op}(\sigma)f\Vert_{F^r_{p,q}(\mathbb{T})}&:=\left\Vert\left(   \sum_{m=0}^{\infty}2^{mrq}\left| \sum_{2^{m}\leq |\xi|<2^{m+1}}e^{ix\xi}\sigma(\xi)\widehat{f}(\xi) \right\vert^{q}   \right)^{1/q} \right\Vert_{L^p(\mathbb{T})}\\
&\leq \left\Vert\left(   \sum_{m=0}^{\infty}2^{mrq}\left| \sum_{2^{m}\leq |\xi|<2^{m+1}}|\sigma(\xi)\widehat{f}(\xi) |\right\vert^{q}   \right)^{1/q} \right\Vert_{L^p(\mathbb{T})}\\
&= \left(   \sum_{m=0}^{\infty}2^{mrq}\left| \sum_{2^{m}\leq |\xi|<2^{m+1}}|\sigma(\xi)\widehat{f}(\xi) |\right\vert^{q}   \right)^{1/q} \\
&\leq \left(   \sum_{m=0}^{\infty}\left| \sum_{\xi\in\mathbb{Z}}2^{mr-m\rho}|\widehat{f}(\xi) |\right\vert^{q}   \right)^{1/q} .
\end{align*}
By using the Minkowski integral inequality (discrete version) we have
$$ \left(   \sum_{m=0}^{\infty}\left| \sum_{\xi\in\mathbb{Z}}2^{mr-m\rho}|\widehat{f}(\xi) |\right\vert^{q}   \right)^{1/q} \leq \sum_{\xi\in\mathbb{Z}}|\widehat{f}(\xi)|\left[\sum_{m=0}  2^{qm(r-\rho)}\right]^{1/q} . $$ 
From the condition $r<\rho$ and by using the Bernstein theorem (i.e $\Vert \widehat{f} \Vert_{L^1(\mathbb{Z})}\lesssim \Vert f\Vert_{\Lambda^s},\frac{1}{2}<s\leq 1$) we have
$$ \Vert \text{Op}(\sigma)f\Vert_{F^r_{p,q}(\mathbb{T})}\lesssim \Vert f \Vert_{\Lambda^s}. $$
If $q=\infty$ and $r\leq \rho$ we observe that
\begin{align*}
\Vert \text{Op}(\sigma)f\Vert_{F^r_{p,q}(\mathbb{T})}&:= \sup_{m\in\mathbb{N}}2^{mr}\left\Vert   \left| \sum_{2^{m}\leq |\xi|<2^{m+1}}e^{ix\xi}\sigma(\xi)\widehat{f}(\xi)    \right| \right\Vert_{L^p(\mathbb{T})}\\
&\leq \sup_{m\in\mathbb{N}}2^{mr}\sum_{2^{m}\leq |\xi|<2^{m+1}}|m(\xi)||\widehat{f}(\xi)|\\
&\leq \sup_{m\in\mathbb{N}}2^{m(r-\rho)}\sum_{\xi\in\mathbb{Z}}|\widehat{f}(\xi)|\\
&\lesssim \Vert f\Vert_{\Lambda^s}.
\end{align*}
\end{proof}
In order to get boundedness from H\"older into Triebel-Lizorkin spaces, we present the following lemma which is a generalization of the Bernstein Theorem. (See \cite{b, blo}). We recall the equivalence $\Lambda^{s}(\mathbb{T})\equiv B^{s}_{\infty,\infty}(\mathbb{T})$ for the H\"older space of order $s.$ 
\begin{lemma}\label{general}
 Let $2/3<p\leq 2$ and let $s_p=1/p-1/2.$ Then, the Fourier transform $f\mapsto \mathscr{F}f$ from $\Lambda^s(\mathbb{T})$ into $L^{p}(\mathbb{T})$ is a bounded operator for all $s,$ $s_p<s<1.$  
\end{lemma}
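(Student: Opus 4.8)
\medskip
The plan is to reproduce the classical dyadic proof of Bernstein's theorem (the case $p=1$, $s_1=1/2$), replacing the concluding Cauchy--Schwarz estimate on each dyadic block by H\"older's inequality with the exponent $2/p$. Fix $f\in\Lambda^s(\mathbb{T})$. Since $|\widehat{f}(0)|\leq\Vert f\Vert_{L^1(\mathbb{T})}\lesssim\Vert f\Vert_{L^\infty(\mathbb{T})}\leq\Vert f\Vert_{\Lambda^s}$, it suffices to bound $\sum_{\xi\neq0}|\widehat{f}(\xi)|^p$, which we organize dyadically as $\sum_{m=0}^{\infty}\sum_{2^m\leq|\xi|<2^{m+1}}|\widehat{f}(\xi)|^p$, each block $\{2^m\leq|\xi|<2^{m+1}\}$ containing exactly $2^{m+1}$ integers.

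First I would estimate the $\ell^2$-mass of each block exactly as in the proof of Theorem~\ref{main}: with $h_m=2\pi/(3\cdot2^m)$ one has $|e^{-i\xi h_m}-1|\geq\sqrt3$ for all $\xi$ with $2^m\leq|\xi|<2^{m+1}$, so by Plancherel's theorem and the definition of the H\"older seminorm,
\begin{equation*}
\sum_{2^m\leq|\xi|<2^{m+1}}|\widehat{f}(\xi)|^2\leq\frac13\sum_{\xi\in\mathbb{Z}}|e^{-i\xi h_m}-1|^2|\widehat{f}(\xi)|^2=\frac13\Vert f(\cdot-h_m)-f(\cdot)\Vert^2_{L^2(\mathbb{T})}\lesssim h_m^{2s}\Vert f\Vert^2_{\Lambda^s}\lesssim 2^{-2ms}\Vert f\Vert^2_{\Lambda^s}.
\end{equation*}

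Next, because $2/3<p\leq2$, the exponents $2/p$ and $2/(2-p)$ are conjugate and both at least $1$, so H\"older's inequality applied on the block $\{2^m\leq|\xi|<2^{m+1}\}$ gives
\begin{equation*}
\sum_{2^m\leq|\xi|<2^{m+1}}|\widehat{f}(\xi)|^p\leq\Bigl(\sum_{2^m\leq|\xi|<2^{m+1}}|\widehat{f}(\xi)|^2\Bigr)^{p/2}\bigl(2^{m+1}\bigr)^{1-p/2}\lesssim 2^{-mps}\,2^{m(1-\frac{p}{2})}\Vert f\Vert^p_{\Lambda^s}=2^{m(1-\frac{p}{2}-ps)}\Vert f\Vert^p_{\Lambda^s}.
\end{equation*}
Summing in $m\geq0$, the resulting geometric series converges precisely when $1-\frac{p}{2}-ps<0$, that is, when $s>\frac1p-\frac12=s_p$; hence $\Vert\mathscr{F}f\Vert_{L^p(\mathbb{Z})}=\Vert\widehat{f}\Vert_{L^p(\mathbb{Z})}\lesssim\Vert f\Vert_{\Lambda^s}$ for every $s$ with $s_p<s<1$. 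Note that this range of exponents is nonempty exactly because the hypothesis $p>2/3$ forces $s_p<1$.

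The computation is entirely routine; the two points that deserve attention are the admissibility of the H\"older exponent $2/p$, which is where the restriction $p\leq2$ is used, and the sign of the exponent $1-\frac{p}{2}-ps$, which produces the sharp threshold $s_p$ and, together with the restriction $s<1$ built into the definition of $\Lambda^s(\mathbb{T})$, accounts for the lower bound $p>2/3$.
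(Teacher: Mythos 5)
Your proof is correct. Note, though, that the paper itself offers no proof of Lemma~\ref{general}: it is quoted as a known generalization of Bernstein's theorem with a pointer to \cite{b, blo} (the case $p=1$, $s>1/2$ is Bernstein's classical result on absolute convergence of Fourier series, and the extension to $2/3<p\le 2$ is the Bernstein--Sz\'asz theorem). What you have written is exactly the standard argument behind those references: the dyadic block estimate via the shift $h_m=2\pi/(3\cdot 2^m)$ and Plancherel --- which is literally the same computation the paper carries out inside the proof of Theorem~\ref{main} --- followed by H\"older with exponents $2/p$ and $2/(2-p)$ on each block of cardinality $2^{m+1}$, and a geometric series that converges precisely when $s>s_p=1/p-1/2$. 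Your bookkeeping of where each hypothesis enters ($p\le 2$ for the admissibility of the H\"older exponent, $p>2/3$ so that the interval $(s_p,1)$ is nonempty) is accurate. One small point worth flagging: the lemma as stated sends $\mathscr{F}f$ into ``$L^p(\mathbb{T})$,'' which is a typo for $L^p(\mathbb{Z})=\ell^p(\mathbb{Z})$ (this is how the lemma is used in Theorems~\ref{trie2} and~\ref{13}); you silently and correctly proved the $\ell^p(\mathbb{Z})$ statement.
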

\begin{theorem}\label{trie2} Let $0\leq \rho\leq 1<\alpha\leq 2,$ $s_{\alpha}=\frac{1}{\alpha}-\frac{1}{2},$ and $r+1-\frac{1}{\alpha}<\rho\leq 1.$ If $\sigma(\xi)$ satisfies the $\rho$-condition, then $\textnormal{Op}(\sigma):B^{s}_{\infty,\infty}(\mathbb{T})\rightarrow F^{r}_{p,q}(\mathbb{T})$ is a bounded operator for all $0<p\leq \infty,$ $0<q<\infty$ and $s_{\alpha}<s<1.$ Moreover, if $r+1-\frac{1}{\alpha}\leq\rho,$ the operator $\textnormal{Op}(\sigma):B^{s}_{\infty,\infty}(\mathbb{T})\rightarrow F^{r}_{p,\infty}(\mathbb{T})$ is bounded.
\end{theorem}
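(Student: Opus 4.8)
The plan is to follow the pattern of the proof of Theorem \ref{trie}, but to replace the appeal to Bernstein's theorem ($\Vert\widehat{f}\Vert_{L^1(\mathbb{Z})}\lesssim\Vert f\Vert_{\Lambda^s}$) by the sharper Lemma \ref{general} applied with exponent $\alpha$, which gives $\Vert\widehat{f}\Vert_{L^{\alpha}(\mathbb{Z})}\lesssim\Vert f\Vert_{\Lambda^{s}}$ for $s_{\alpha}<s<1$ (legitimate since $1<\alpha\le 2$ lies in the admissible range $2/3<\alpha\le 2$ of the lemma). First I would start from the definition of the Triebel--Lizorkin norm of $\textnormal{Op}(\sigma)f$ and estimate each dyadic block pointwise in $x$: by the triangle inequality and the $\rho$-condition,
\[
\Big|\sum_{2^{m}\le|\xi|<2^{m+1}}e^{ix\xi}\sigma(\xi)\widehat{f}(\xi)\Big|\le C\,2^{-m\rho}\sum_{2^{m}\le|\xi|<2^{m+1}}|\widehat{f}(\xi)|,
\]
and then apply H\"older's inequality on the annulus $\{2^{m}\le|\xi|<2^{m+1}\}$, whose cardinality is $\lesssim 2^{m}$, with the conjugate exponents $\alpha$ and $\alpha'=\alpha/(\alpha-1)$. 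This produces the further bound $C\,2^{-m\rho}\,2^{m(1-1/\alpha)}\big(\sum_{2^{m}\le|\xi|<2^{m+1}}|\widehat{f}(\xi)|^{\alpha}\big)^{1/\alpha}$.

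The point is that this estimate no longer depends on $x$, so the outer $L^{p}(\mathbb{T})$ norm in the definition of $F^{r}_{p,q}$ contributes only a constant (finite, for each fixed $p$, because $\mathbb{T}$ has finite measure), and the full range $0<p\le\infty$ is handled at once. It then remains to bound the $\ell^{q}$ expression $\sum_{m=0}^{\infty}2^{mq(r-\rho+1-1/\alpha)}\,b_{m}^{q/\alpha}$, where $b_{m}:=\sum_{2^{m}\le|\xi|<2^{m+1}}|\widehat{f}(\xi)|^{\alpha}\ge 0$ and $\sum_{m}b_{m}\le\Vert\widehat{f}\Vert_{L^{\alpha}(\mathbb{Z})}^{\alpha}$. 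Since $t\mapsto t^{q/\alpha}$ is increasing on $[0,\infty)$, one has $b_{m}^{q/\alpha}\le\big(\sup_{k}b_{k}\big)^{q/\alpha}\le\big(\sum_{k}b_{k}\big)^{q/\alpha}\le\Vert\widehat{f}\Vert_{L^{\alpha}(\mathbb{Z})}^{q}$ uniformly in $m$; factoring this out leaves the geometric series $\sum_{m}2^{mq(r-\rho+1-1/\alpha)}$, which converges exactly because $r+1-1/\alpha<\rho$. Combining these inequalities with Lemma \ref{general} gives $\Vert\textnormal{Op}(\sigma)f\Vert_{F^{r}_{p,q}(\mathbb{T})}\lesssim\Vert\widehat{f}\Vert_{L^{\alpha}(\mathbb{Z})}\lesssim\Vert f\Vert_{\Lambda^{s}}=\Vert f\Vert_{B^{s}_{\infty,\infty}(\mathbb{T})}$, as desired.

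For the endpoint case $q=\infty$ I would run the same argument with the $\ell^{q}$ sum replaced by a supremum. The $x$-independent bound then becomes $\big(\sup_{m}2^{m(r-\rho+1-1/\alpha)}\big)\Vert\widehat{f}\Vert_{L^{\alpha}(\mathbb{Z})}$, and under the hypothesis $r+1-1/\alpha\le\rho$ the exponent $r-\rho+1-1/\alpha$ is nonpositive, so the supremum is attained at $m=0$ and the whole expression is controlled by a constant multiple of $\Vert\widehat{f}\Vert_{L^{\alpha}(\mathbb{Z})}\lesssim\Vert f\Vert_{\Lambda^{s}}$, yielding boundedness of $\textnormal{Op}(\sigma)$ into $F^{r}_{p,\infty}(\mathbb{T})$.

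I do not expect any real obstacle here: the computation is elementary once the dyadic block has been made independent of $x$. The only places that require a little care are (i) checking that every inequality can indeed be made uniform in $x$, so that the $L^{p}$-norm in the Triebel--Lizorkin quasi-norm plays no role and all $0<p\le\infty$ are admissible; (ii) the exponent bookkeeping, where the condition $r-\rho+1-1/\alpha<0$ (respectively $\le 0$) must match the stated hypothesis $r+1-1/\alpha<\rho$ (respectively $\le\rho$); and (iii) invoking Lemma \ref{general} only within its range $2/3<\alpha\le 2$, $s_{\alpha}<s<1$. One could instead route the $\ell^{q}$-sum estimate through a Minkowski-type inequality as in the proof of Theorem \ref{trie}, but the monotonicity trick $\sup_{k}b_{k}\le\sum_{k}b_{k}$ treats the cases $q\le\alpha$ and $q>\alpha$ simultaneously and seems cleaner.
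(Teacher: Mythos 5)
Your proof is correct and takes essentially the same route as the paper's: an $x$-independent pointwise bound on each dyadic block via the $\rho$-condition, H\"older's inequality with exponents $\alpha,\alpha'$ over the annulus of cardinality $\sim 2^{m}$, convergence of the resulting geometric series under $r+1-\tfrac{1}{\alpha}<\rho$ (respectively $\le$ for $q=\infty$), and Lemma \ref{general} to pass from $\Vert\widehat{f}\Vert_{L^{\alpha}(\mathbb{Z})}$ to $\Vert f\Vert_{\Lambda^{s}}$. Your monotonicity step $b_{m}^{q/\alpha}\le\bigl(\sum_{k}b_{k}\bigr)^{q/\alpha}$ is just the paper's enlargement of the annulus $\ell^{\alpha}$-sum to all of $\mathbb{Z}$ in a different guise.
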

\begin{proof}
From the proof of Theorem \ref{trie} we have
$$ \Vert \text{Op}(\sigma)f\Vert_{F^r_{p,q}(\mathbb{T})}\leq   \left(   \sum_{m=0}^{\infty}2^{mrq}\left| \sum_{2^{m}\leq |\xi|<2^{m+1}}|\sigma(\xi)\widehat{f}(\xi) |\right\vert^{q}   \right)^{1/q}  .$$
On the other hand, if $1<\alpha \leq 2$ and $1/\alpha+1/\alpha'=1,$ by using the H\"older inequality we obtain
\begin{align*}
& \left(   \sum_{m=0}^{\infty}2^{mrq}  \left| \sum_{2^{m}\leq |\xi|<2^{m+1}}|\sigma(\xi)\widehat{f}(\xi) |\right\vert^{q}   \right)^{1/q}\\
&\leq  \left(   \sum_{m=0}^{\infty} 2^{mq(r-\rho)}\left| \left(\sum_{\xi\in\mathbb{Z}}|\widehat{f}(\xi)|^{\alpha}\right)^{\frac{1}{\alpha}}   \left(\sum_{ 2^{m}\leq |\xi|<2^{m+1} } 1\right)^{\frac{1}{\alpha'}}          \right\vert^{q}   \right)^{1/q} \\
&\leq \left(   \sum_{m=0}^{\infty} 2^{mq(r-\rho)}\left|    (2^{\frac{m+1}{\alpha'}})          \right\vert^{q}   \right)^{1/q}\Vert \widehat{f} \Vert_{L^{\alpha}(\mathbb{Z})}\\
&\lesssim \left(   \sum_{m=0}^{\infty} 2^{mq(r-\rho+\frac{1}{\alpha'})}   \right)^{1/q}\Vert \widehat{f} \Vert_{L^{\alpha}(\mathbb{Z})}.
\end{align*}
From Lemma \ref{general} and the condition $r+1-\frac{1}{\alpha}<\rho\leq 1$ we claim

$$ \Vert \text{Op}(\sigma)f\Vert_{F^r_{p,q}(\mathbb{T})}\lesssim \Vert f \Vert_{\Lambda^s}$$
for all $s_{\alpha}<s<1.$
By a similar argument, we may prove
$$ \Vert \text{Op}(\sigma)f\Vert_{F^r_{p,\infty}(\mathbb{T})}\lesssim \sup_{m\in\mathbb{N}}2^{m(r-\rho+1/\alpha')} \Vert f\Vert_{\Lambda^s}. $$
Hence,
$$ \Vert \text{Op}(\sigma)f\Vert_{F^r_{p,\infty}(\mathbb{T})}\lesssim \Vert f \Vert_{\Lambda^s}$$
for all $s_{\alpha}<s<1,$ $0<p\leq \infty$ and $r+1-\frac{1}{\alpha}\leq \rho.$ 
\end{proof}

\begin{theorem}\label{HTC} Let us consider the periodic pseudo-differential operator $\textnormal{Op}(\sigma)$ with the symbol $\sigma(x,\xi)$ satisfying 
$$  \vert\partial_{x}^{\alpha} \sigma(x,\xi) \vert\leq C_{\alpha}|\xi|^{-\rho},\,\,|\alpha|\leq [1/q]+1,\xi\neq 0.$$
If $1\leq q<\infty,$ $0<p\leq \infty,$ $r<\rho\leq 1$ and $\frac{1}{2}<s\leq 1,$ then $\textnormal{Op}(\sigma)$ is bounded from $\Lambda^s$ into $F^{r}_{p,q}.$ Also, if we assume
 $0\leq \rho\leq 1<\alpha\leq 2,$ $s_{\alpha}=\frac{1}{\alpha}-\frac{1}{2},$ $0<p\leq \infty$ and $r+1-\frac{1}{\alpha}<\rho\leq 1,$ $\textnormal{Op}(\sigma):\Lambda^s\rightarrow F^{r}_{p,q}$ is a bounded linear operator for all $s_{\alpha}<s<1.$ 
\end{theorem}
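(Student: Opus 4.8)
The plan is to reduce Theorem~\ref{HTC} to the Fourier multiplier results already established, Theorem~\ref{trie} and Theorem~\ref{trie2}, by the freezing argument used for Theorem~\ref{non} and Theorem~\ref{main22} (the Ruzhansky--Wirth technique adapted to the torus). For $f\in C^{\infty}(\mathbb{T})$ one writes, exactly as in the proof of Theorem~\ref{non},
$$\textnormal{Op}(\sigma)f(x)=A_{x}f(x),\qquad A_{z}f(x)=(\varkappa(z,\cdot)\ast f)(x),\qquad \varkappa(z,y)=\sum_{\xi\in\mathbb{Z}}e^{iy\xi}\sigma(z,\xi),$$
so that for each fixed $z\in\mathbb{T}$ the operator $A_{z}=\textnormal{Op}(\sigma(z,\cdot))$ is a \emph{Fourier multiplier} and $\partial_{z}^{\beta}A_{z}f=\textnormal{Op}(\partial_{z}^{\beta}\sigma(z,\cdot))f$ for $|\beta|\le[1/q]+1$. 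The hypothesis $|\partial_{x}^{\beta}\sigma(x,\xi)|\le C_{\beta}|\xi|^{-\rho}$ says precisely that every symbol $\xi\mapsto\partial_{z}^{\beta}\sigma(z,\xi)$ satisfies the $\rho$-condition \emph{uniformly in $z$}, so Theorem~\ref{trie} (resp. Theorem~\ref{trie2}) furnishes a uniform bound for the frozen multipliers $\textnormal{Op}(\partial_{z}^{\beta}\sigma(z,\cdot))$ from $\Lambda^{s}$ into $F^{r}_{p,q}$ in the stated ranges $r<\rho\le1$, $\tfrac12<s\le1$ (resp. $r+1-\tfrac1\alpha<\rho\le1$, $s_{\alpha}<s<1$).

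To carry this estimate back to the non-invariant operator $\textnormal{Op}(\sigma)$ I would mirror the mechanism of Theorem~\ref{non}. Writing $(P_{m}g)(x):=\sum_{2^{m}\le|\xi|<2^{m+1}}e^{ix\xi}\widehat{g}(\xi)$ for the $m$-th dyadic block, one fixes $x$ and $m$, expresses $P_{m}\textnormal{Op}(\sigma)f(x)$ through the identity $\textnormal{Op}(\sigma)f(y)=A_{y}f(y)$, replaces the inner $A_{y}$ by $A_{z}$ with $z$ a free parameter, and applies the one-dimensional Sobolev embedding $W^{[1/q]+1,q}(\mathbb{T})\hookrightarrow C(\mathbb{T})$ in the variable $z$. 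The number of $x$-derivatives here is $[1/q]+1$ rather than $[1/p]+1$ (as in Theorem~\ref{non}) precisely because the exponent that has to absorb the $z$-average inside the Triebel--Lizorkin quasinorm $\big\Vert\big(\sum_{m}2^{mrq}|\cdot|^{q}\big)^{1/q}\big\Vert_{L^{p}}$ is $q$; this is also what forces the restriction $1\le q<\infty$, which is exactly what keeps the Sobolev embedding valid. The outcome is a pointwise bound
$$|(P_{m}\textnormal{Op}(\sigma)f)(x)|^{q}\ \lesssim\ \sum_{|\beta|\le[1/q]+1}\int_{\mathbb{T}}\big|(P_{m}\textnormal{Op}(\partial_{z}^{\beta}\sigma(z,\cdot))f)(x)\big|^{q}\,dz ,$$
and since each $\textnormal{Op}(\partial_{z}^{\beta}\sigma(z,\cdot))$ is a Fourier multiplier its $m$-th block satisfies $|(P_{m}\textnormal{Op}(\partial_{z}^{\beta}\sigma(z,\cdot))f)(x)|\le\sum_{2^{m}\le|\xi|<2^{m+1}}|\partial_{z}^{\beta}\sigma(z,\xi)|\,|\widehat{f}(\xi)|\le C\,2^{-m\rho}\sum_{2^{m}\le|\xi|<2^{m+1}}|\widehat{f}(\xi)|=:c_{m}$, a quantity independent of both $x$ and $z$. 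Hence $|(P_{m}\textnormal{Op}(\sigma)f)(x)|\lesssim c_{m}$ uniformly in $x$, so taking the $L^{p}_{x}$-quasinorm in the definition of $\Vert\cdot\Vert_{F^{r}_{p,q}}$ costs only a constant factor and $\Vert\textnormal{Op}(\sigma)f\Vert_{F^{r}_{p,q}(\mathbb{T})}\lesssim\big(\sum_{m}2^{mrq}c_{m}^{q}\big)^{1/q}$. This last sum is exactly the quantity estimated in the proofs of Theorems~\ref{trie} and \ref{trie2}: the bound $c_{m}\le C2^{-m\rho}\Vert\widehat{f}\Vert_{L^{1}(\mathbb{Z})}$, Bernstein's theorem ($\Vert\widehat{f}\Vert_{L^{1}(\mathbb{Z})}\lesssim\Vert f\Vert_{\Lambda^{s}}$ for $\tfrac12<s\le1$) and $r<\rho$ give the first assertion, while the bound $c_{m}\le C2^{-m\rho}2^{m/\alpha'}\Vert\widehat{f}\Vert_{L^{\alpha}(\mathbb{Z})}$, Lemma~\ref{general} and $r+1-\tfrac1\alpha<\rho$ give the second.

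I expect the main obstacle to be the freezing step itself, i.e.\ justifying the displayed pointwise bound for $(P_{m}\textnormal{Op}(\sigma)f)(x)$ in terms of the blocks of the frozen multipliers; as already in the proof of Theorem~\ref{non}, this is the only point at which the non-invariance of $\textnormal{Op}(\sigma)$ actually costs something. It is where the finite smoothness of $\sigma$ in $x$ is consumed --- through the decay in $k$ of the $x$-Fourier coefficients $\widehat{\sigma(\cdot,\xi)}(k)$ and the Sobolev embedding in $z$ --- and it is what dictates the number $[1/q]+1$ of $x$-derivatives and the restriction $1\le q<\infty$. Once this reduction is in hand everything else is soft: since the majorant $c_{m}$ does not depend on $x$, no Minkowski inequality or Fatou argument is needed to commute the $z$-integration past the mixed quasinorm of $F^{r}_{p,q}$, and no relation among $r,\rho,s,\alpha$ enters beyond those already used in Theorems~\ref{trie} and \ref{trie2}. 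The final assertion of the theorem (the case with the hypotheses involving $\alpha$) follows by the identical argument with Theorem~\ref{trie2} in place of Theorem~\ref{trie} in the first step.
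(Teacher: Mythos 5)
Your proposal is correct and follows essentially the same route as the paper: freeze the $x$-variable, apply the Sobolev embedding in $z$ with $[1/q]+1$ derivatives to pass to the frozen multipliers $\textnormal{Op}(\partial_{z}^{\beta}\sigma(z,\cdot))$, and invoke Theorems~\ref{trie} and~\ref{trie2}. The only (harmless) difference is cosmetic: you observe that the dyadic blocks of the frozen multipliers are majorized by an $x$- and $z$-independent quantity $c_{m}$, which lets you dispense with the Fatou/sup-interchange bookkeeping that the paper carries through the mixed $F^{r}_{p,q}$ quasinorm.
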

\begin{proof}
If $1\leq q<\infty,$ by the Sobolev embedding theorem we write,

\begin{align*}
\left| \sum_{2^{m}\leq |\xi|<2^{m+1}}e^{ix\xi}\mathscr{F}( \text{Op}(\sigma)f )(\xi) \right\vert^{q} &\leq \sup_{z\in\mathbb{T}}\left| \sum_{2^{m}\leq |\xi|<2^{m+1}}e^{ix\xi}\mathscr{F}( \text{Op}(\sigma(z,\cdot))f )(\xi) \right\vert^{q}\\
&\lesssim \sum_{|\alpha|\leq [1/q]+1}\int_{\mathbb{T}}\vert  \sum_{2^{m}\leq |\xi|<2^{m+1}}e^{ix\xi}\mathscr{F}( \text{Op}(\partial_{z}^{\alpha}\sigma(z,\cdot))f )(\xi)   \vert^qdz\\
&\lesssim \sup_{|\alpha|\leq [1/q]+1}\int_{\mathbb{T}}\vert   \sum_{2^{m}\leq |\xi|<2^{m+1}}e^{ix\xi}\mathscr{F}( \text{Op}(\partial_{z}^{\alpha}\sigma(z,\cdot))f )(\xi)   \vert^qdz
\end{align*}

From this inequality we deduce that
\begin{align*}
&\Vert \text{Op}(\sigma)f\Vert_{F^r_{p,q}(\mathbb{T})}\\
&=\left\Vert\left(   \sum_{m=0}^{\infty}2^{mrq}\left| \sum_{2^{m}\leq |\xi|<2^{m+1}}e^{ix\xi}\mathscr{F}^{-1}[\sigma(x,\xi)\widehat{f}\,](\xi) \right\vert^{q}   \right)^{1/q} \right\Vert_{L^p(\mathbb{T})}\\
&\lesssim \left\Vert\left(   \sum_{m=0}^{\infty}2^{mrq}\int_{\mathbb{T}} \sup_{|\alpha|\leq [1/q]+1}\vert   \sum_{2^{m}\leq |\xi|<2^{m+1}}e^{ix\xi}\mathscr{F}( \text{Op}(\partial_{z}^{\alpha}\sigma(z,\cdot))f )(\xi)   \vert^qdz  \right)^{1/q} \right\Vert_{L^p(\mathbb{T})}\\
&\lesssim \left\Vert\left( \sum_{m=0}^{\infty}2^{mrq}  \sup_{|\alpha|\leq [1/q]+1
}\sup_{z\in\mathbb{T}}   \vert   \sum_{2^{m}\leq |\xi|<2^{m+1}}e^{ix\xi}\mathscr{F}( \text{Op}(\partial_{z}^{\alpha}\sigma(z,\cdot))f )(\xi)   \vert^q  \right)^{1/q} \right\Vert_{L^p(\mathbb{T})}\\
&\lesssim \sup_{|\alpha|\leq [1/q]+1
}\sup_{z\in\mathbb{T}}  \left\Vert\left( \sum_{m=0}^{\infty}2^{mrq}   \vert   \sum_{2^{m}\leq |\xi|<2^{m+1}}e^{ix\xi}\mathscr{F}( \text{Op}(\partial_{z}^{\alpha}\sigma(z,\cdot))f )(\xi)   \vert^q  \right)^{1/q} \right\Vert_{L^p(\mathbb{T})}.
\end{align*}
Hence we get  (by using the Fatou's Lemma)
\begin{align*}
&\Vert \text{Op}(\sigma)f\Vert_{F^r_{p,q}(\mathbb{T})}\\&\lesssim \sup_{|\alpha|\leq [1/q]+1,z\in\mathbb{T}} \left\Vert\left(   \sum_{m=0}^{\infty}2^{mrq} \vert   \sum_{2^{m}\leq |\xi|<2^{m+1}}e^{ix\xi}\mathscr{F}( \text{Op}(\partial_{z}^{\alpha}\sigma(z,\cdot))f )(\xi)   \vert^q \right)^{1/q} \right\Vert_{L^p(\mathbb{T})}\\
&\lesssim \sup_{|\alpha|\leq [1/q]+1,z\in\mathbb{T}} \Vert \text{Op}(\partial^{\alpha}_{z}\sigma(z,\cdot))f\Vert_{F^r_{p,q}(\mathbb{T})} \\
&\leq \sup_{|\alpha|\leq [1/q]+1,z\in\mathbb{T}} \Vert \text{Op}(\partial^{\alpha}_{z}\sigma(z,\cdot))\Vert_{B(\Lambda^s,F^r_{p,q})}\Vert f \Vert_{\Lambda^s}. \\
\end{align*}
So, by the  last inequality, Theorem \ref{trie} and Theorem \ref{trie2}, we deduce the Boundedness of $\text{Op}(\sigma(\cdot,\cdot))$ from $\Lambda^s$ into $F^{r}_{p,q}$ in the following cases: 
\begin{itemize}
\item $1\leq q<\infty,$ $ 0<p\leq \infty,$  $r<\rho, \frac{1}{2}<s\leq 1.$
\item $0\leq \rho\leq 1,$ $1<\alpha\leq 2,$ $0<p\leq \infty,$ $r+1-\frac{1}{\alpha}<\rho$ and $s_{\alpha}<s<1.$
\end{itemize}
\end{proof}

\begin{theorem}\label{min} Let us consider $\textnormal{Op}(\sigma)$ be a Fourier multiplier with symbol satisfying the $\rho-$condition. Then $\textnormal{Op}(\sigma):B^{s}_{\infty,\infty}(\mathbb{T})\rightarrow F^{r}_{p,q}(\mathbb{T}),$ $0< p\leq \infty,$ $1<q<\infty$ is a bounded operator if $r+\frac{1}{2}-\rho<s\leq 1.$ If  $r+\frac{1}{2}-\rho\leq s\leq 1,$ then $\textnormal{Op}(\sigma):B^{s}_{\infty,\infty}(\mathbb{T})\rightarrow F^{r}_{p,\infty}(\mathbb{T})$ is continuous.  
\end{theorem}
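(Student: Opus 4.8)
The plan is to deduce the Triebel--Lizorkin estimate from the uniform control of the dyadic blocks which is already implicit in the proof of Theorem~\ref{main}. Fix $f\in C^{\infty}(\mathbb{T})$ and set
\begin{equation*}
g_{m}(x)=\sum_{2^{m}\leq|\xi|<2^{m+1}}e^{ix\xi}\sigma(\xi)\widehat{f}(\xi),
\end{equation*}
so that $\|\textnormal{Op}(\sigma)f\|_{F^{r}_{p,q}(\mathbb{T})}=\big\|\big(\sum_{m\geq 0}2^{mrq}|g_{m}|^{q}\big)^{1/q}\big\|_{L^{p}(\mathbb{T})}$. The first step is to reproduce the dyadic bound obtained in the proof of Theorem~\ref{main}: taking $h=2\pi/(3\cdot 2^{m})$ one has $|e^{-ih\xi}-1|\geq\sqrt{3}$ for $2^{m}\leq|\xi|\leq 2^{m+1}$, and combining this with the $\rho$-condition, Plancherel's theorem, and the inequality $\|f(\cdot-h)-f(\cdot)\|_{L^{2}(\mathbb{T})}\lesssim|h|^{s}\|f\|_{\Lambda^{s}}$ gives
\begin{equation*}
\sum_{2^{m}\leq|\xi|<2^{m+1}}|\sigma(\xi)\widehat{f}(\xi)|^{2}\lesssim 2^{-2m(\rho+s)}\|f\|_{\Lambda^{s}}^{2}.
\end{equation*}

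The second step is to turn this $\ell^{2}$-bound into a pointwise one. Since the dyadic block $\{\xi:2^{m}\leq|\xi|<2^{m+1}\}$ contains $O(2^{m})$ integers, the Cauchy--Schwarz inequality yields, for every $x\in\mathbb{T}$,
\begin{equation*}
|g_{m}(x)|\leq\sum_{2^{m}\leq|\xi|<2^{m+1}}|\sigma(\xi)\widehat{f}(\xi)|\lesssim 2^{m/2}\Big(\sum_{2^{m}\leq|\xi|<2^{m+1}}|\sigma(\xi)\widehat{f}(\xi)|^{2}\Big)^{1/2}\lesssim 2^{-m(\rho+s-\frac12)}\|f\|_{\Lambda^{s}},
\end{equation*}
hence $\|g_{m}\|_{L^{\infty}(\mathbb{T})}\lesssim 2^{-m(\rho+s-\frac12)}\|f\|_{\Lambda^{s}}$. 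Consequently, for $1<q<\infty$,
\begin{equation*}
\Big(\sum_{m\geq 0}2^{mrq}|g_{m}(x)|^{q}\Big)^{1/q}\lesssim\Big(\sum_{m\geq 0}2^{mq(r-\rho-s+\frac12)}\Big)^{1/q}\|f\|_{\Lambda^{s}},
\end{equation*}
and the geometric series on the right converges precisely because the hypothesis $r+\tfrac12-\rho<s$ makes the exponent $r-\rho-s+\tfrac12$ strictly negative. Since $\mathbb{T}$ has finite measure, $\|h\|_{L^{p}(\mathbb{T})}\leq(2\pi)^{1/p}\|h\|_{L^{\infty}(\mathbb{T})}$ for $0<p<\infty$ (and the inequality is trivial for $p=\infty$), so $\|\textnormal{Op}(\sigma)f\|_{F^{r}_{p,q}(\mathbb{T})}\lesssim\|f\|_{\Lambda^{s}}$ for all $0<p\leq\infty$.

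For the endpoint assertion one replaces the $\ell^{q}$-sum by a supremum: the $L^{\infty}$-bound on $g_{m}$ gives $\sup_{m\geq 0}2^{mr}|g_{m}(x)|\lesssim\sup_{m\geq 0}2^{m(r-\rho-s+\frac12)}\|f\|_{\Lambda^{s}}$, which is $\lesssim\|f\|_{\Lambda^{s}}$ as soon as $r+\tfrac12-\rho\leq s$, and once more the $L^{p}(\mathbb{T})$ norm is dominated by the $L^{\infty}(\mathbb{T})$ norm; this gives boundedness of $\textnormal{Op}(\sigma)$ from $B^{s}_{\infty,\infty}(\mathbb{T})$ into $F^{r}_{p,\infty}(\mathbb{T})$.

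Finally, since every step above is valid for an arbitrary $f\in\Lambda^{s}(\mathbb{T})=B^{s}_{\infty,\infty}(\mathbb{T})$ --- the blocks $g_{m}$ being finite trigonometric sums, and the modulus-of-continuity estimate holding throughout $\Lambda^{s}$ --- the bound extends to the whole space $B^{s}_{\infty,\infty}(\mathbb{T})$. I do not expect any genuine obstacle: the heart of the argument is simply the observation that the uniform dyadic estimate already produced in Theorem~\ref{main} is strong enough to dominate the Triebel--Lizorkin norm, the only care needed being the bookkeeping of the exponents and the (identical) separate treatment of $p=\infty$ and $q=\infty$. In fact the argument uses nothing about $q$ beyond $q>0$, so the hypothesis $1<q<\infty$ in the statement may be relaxed to $0<q<\infty$.
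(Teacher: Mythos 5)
Your proof is correct and follows essentially the same route as the paper's: both reduce to the dyadic $\ell^2$ estimate from Theorem \ref{main}, apply Cauchy--Schwarz over the block of $O(2^m)$ frequencies to get a pointwise (hence $L^\infty$, hence $L^p$ on the finite-measure torus) bound on each block, and sum the resulting geometric series under $r+\tfrac12-\rho<s$ (respectively take the supremum under $r+\tfrac12-\rho\leq s$). Your closing remark that the hypothesis $1<q<\infty$ can be relaxed to $0<q<\infty$ is also consistent with the argument, since the bound is applied termwise and no $\ell^q$ triangle inequality is needed.
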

\begin{proof}
From the proof of Theorem \ref{main} we have
\begin{align}\sum_{2^m\leq|\xi|<2^{m+1}}|\mathscr{F}(\text{Op}(\sigma)f)(\xi)|^2 &\leq 2^{-2m\rho}(\frac{2\pi}{3\cdot 2^m})^{2s}\Vert f\Vert^2_{\Lambda^{s}(\mathbb{T})}\\
&\lesssim 2^{-2m(\rho+s)}\Vert f\Vert^2_{\Lambda^{s}}.
\end{align}
Hence
\begin{align*}
& \left(   \sum_{m=0}^{\infty}2^{mrq}  \left|  \sum_{2^{m}\leq |\xi|<2^{m+1}}|\sigma(\xi)\widehat{f}(\xi) |\right\vert^{q}   \right)^{1/q}\\
& \leq \left(   \sum_{m=0}^{\infty}2^{mrq}  \left|  \sum_{2^{m}\leq |\xi|<2^{m+1}}|\sigma(\xi)\widehat{f}(\xi) |^{2}\right\vert^{q/2}2^{q(m+1)/2}   \right)^{1/q}\\
& \lesssim\left(   \sum_{m=0}^{\infty}2^{mrq} 2^{-mq(\rho+s)}2^{q(m+1)/2}   \right)^{1/q}\Vert f\Vert_{\Lambda^s}.\\
\end{align*}
From the condition $r-\rho+\frac{1}{2}<s$ we deduce the boundedness of $\text{Op}(\sigma),$ in fact
$$ \Vert \text{Op}(\sigma)f\Vert_{F^r_{p,q}(\mathbb{T})}\leq \left(   \sum_{m=0}^{\infty}2^{mrq} 2^{-mq(\rho+s)+q(m+1)/2}   \right)^{1/q}  \Vert f \Vert_{\Lambda^s}.$$
A similar proof is valid for $q=\infty$ and $r-\rho+\frac{1}{2}\leq s.$
\end{proof}
\begin{remark} We observe that similar extensions that we give here of the Theorem \ref{main} to the  non-invariant case of pseudo-differential operators can be obtained if in place of $B^{r}_{p,q}(\mathbb{T})$ we write $F^{r}_{p,q}(\mathbb{T})$.
\end{remark}

We end this section with the following theorem on boundedness of periodic pseudo-differential operators on H\"older spaces.
\begin{theorem}\label{13}
Let $2/3< p\leq 2,$ $s_{p}=1/p-1/2,$ $0<r<1$ and $s_{p}<s<1.$ If $r+\frac{1}{q}\leq \rho$ and $\sigma(\xi)$ satisfies the $\rho$-condition, then  $\textnormal{Op}(\sigma):B^{s}_{\infty,\infty}(\mathbb{T})\rightarrow B^{r}_{\infty,\infty}(\mathbb{T})$ is a bounded Fourier multiplier.
\end{theorem}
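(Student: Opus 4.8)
The plan is to estimate the target norm $\Vert\textnormal{Op}(\sigma)f\Vert_{B^{r}_{\infty,\infty}}$ directly from its dyadic definition, using the identification $B^{r}_{\infty,\infty}(\mathbb{T})=\Lambda^{r}(\mathbb{T})$ so that one never passes through a difference quotient. Fix $f\in C^{\infty}(\mathbb{T})$. For each $m\ge 0$ I would bound the $L^{\infty}$-norm of the $m$-th dyadic block of $\textnormal{Op}(\sigma)f$ by the $\ell^{1}$-norm of its Fourier coefficients over the annulus, and then apply the $\rho$-condition \eqref{rho2}:
\[
\Big\Vert\sum_{2^{m}\le|\xi|<2^{m+1}}e^{ix\xi}\sigma(\xi)\widehat f(\xi)\Big\Vert_{L^{\infty}(\mathbb{T})}\le\sum_{2^{m}\le|\xi|<2^{m+1}}|\sigma(\xi)|\,|\widehat f(\xi)|\le 2^{-m\rho}\sum_{2^{m}\le|\xi|<2^{m+1}}|\widehat f(\xi)|.
\]

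Next I would pass from the $\ell^{1}$-sum to an $\ell^{p}$-sum. When $1\le p\le 2$ this is a H\"older inequality with the conjugate exponent $q$, and since the number of integers in a dyadic annulus of radius $2^{m}$ is comparable to $2^{m}$ one gets
\[
\sum_{2^{m}\le|\xi|<2^{m+1}}|\widehat f(\xi)|\le\big(\#\{\xi:2^{m}\le|\xi|<2^{m+1}\}\big)^{1/q}\Big(\sum_{\xi\in\mathbb{Z}}|\widehat f(\xi)|^{p}\Big)^{1/p}\lesssim 2^{m/q}\,\Vert\widehat f\Vert_{L^{p}(\mathbb{Z})};
\]
for $2/3<p<1$ the same conclusion with $1/q=0$ follows from the elementary embedding $\ell^{p}\hookrightarrow\ell^{1}$. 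Now Lemma \ref{general} (the generalized Bernstein theorem) applies under the hypotheses $2/3<p\le 2$, $s_{p}<s<1$ and yields $\Vert\widehat f\Vert_{L^{p}(\mathbb{Z})}\lesssim\Vert f\Vert_{\Lambda^{s}}$. Combining the three steps,
\[
2^{mr}\Big\Vert\sum_{2^{m}\le|\xi|<2^{m+1}}e^{ix\xi}\sigma(\xi)\widehat f(\xi)\Big\Vert_{L^{\infty}(\mathbb{T})}\lesssim 2^{m(r-\rho+1/q)}\Vert f\Vert_{\Lambda^{s}},
\]
and since the $B^{r}_{\infty,\infty}$-norm is a supremum over $m$, the hypothesis $r+\tfrac1q\le\rho$ makes the right-hand side bounded uniformly in $m$. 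Taking the supremum gives $\Vert\textnormal{Op}(\sigma)f\Vert_{B^{r}_{\infty,\infty}}\lesssim\Vert f\Vert_{\Lambda^{s}}$, and density of $C^{\infty}(\mathbb{T})$ in $B^{s}_{\infty,\infty}(\mathbb{T})$ finishes the argument.

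The computation is essentially routine once Lemma \ref{general} is in hand; the only delicate point is the bookkeeping of the two competing powers of $2^{m}$, namely the gain $2^{-m\rho}$ coming from the symbol decay against the loss $2^{m/q}$ coming from the cardinality of the dyadic annulus together with the Besov weight $2^{mr}$. I expect no genuine obstacle beyond correctly identifying $q$ as the exponent conjugate to $p$---which is exactly what the cardinality factor $2^{m/q}=2^{m(1-1/p)}$ forces---and noticing that, because the $B^{r}_{\infty,\infty}$-norm involves only a supremum and not a summation over $m$, the non-strict inequality $r+\tfrac1q\le\rho$ is enough (a strict inequality would be required were the target an $\ell^{q}$-summed Besov space, as in Theorem \ref{main}).
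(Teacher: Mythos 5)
Your argument is essentially identical to the paper's proof: both bound each dyadic block in $L^{\infty}$ by the $\ell^{1}$-sum of its Fourier coefficients, apply the $\rho$-condition, pass to $\Vert\widehat f\Vert_{L^{p}(\mathbb{Z})}$ by H\"older against the cardinality $\approx 2^{m}$ of the annulus, invoke Lemma \ref{general}, and conclude from $r+\frac1q\le\rho$ that the supremum over $m$ is finite. Your explicit handling of the range $2/3<p<1$ via $\ell^{p}\hookrightarrow\ell^{1}$ is a small clarification the paper glosses over, but it does not change the route.
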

\begin{proof}
First we consider the case where $\sigma$ depends only on the Fourier variable $\xi.$ So we get for $s\geq 0$
\begin{align*}
\Vert  \mathrm{Op}(\sigma)f\Vert_{B^{r}_{\infty,\infty}} &=\sup_{s\in\mathbb{N}} 2^{sr}\Vert \sum_{2^{s}\leq |\xi|<2^{s+1}}e^{ix\xi}\mathscr{F}(\mathrm{Op}(\sigma)f)(\xi) \Vert_{L^{\infty}(\mathbb{T})}\\
&\leq \sup_{s\in\mathbb{N}} 2^{sr} \sum_{2^{s}\leq |\xi|<2^{s+1}}|\mathscr{F}(\mathrm{Op}(\sigma)f)(\xi)| \\
&=\sup_{s\in\mathbb{N}}2^{sr} \sum_{2^{s}\leq |\xi|<2^{s+1}}|\widehat{f}(\xi)||\sigma(\xi)| 
\end{align*}
By H\"older inequality we obtain
\begin{align*}
\Vert  \mathrm{Op}(\sigma)f\Vert_{B^{r}_{\infty,\infty}} &\lesssim \sup_{s\in\mathbb{N}}\left(\sum_{2^{s}\leq |\xi|<2^{s+1}} |\hat{f}(\xi)|^{p}\right)^{1/p}\left( \sum_{2^{s}\leq |\xi|<2^{s+1}}|\sigma(\xi)|^{q}2^{srq} \right)^{1/q}\\
&\lesssim \sup_{s\in\mathbb{N}} \Vert \widehat{f} \Vert_{L^{p}(\mathbb{Z})}\left( \sum_{2^{s}\leq |\xi|<2^{s+1}}\langle \xi \rangle^{-\rho q}2^{srq} \right)^{1/q}\\
&\lesssim \sup_{s\in\mathbb{N}} \Vert \widehat{f} \Vert_{L^{p}(\mathbb{Z})}\left( \sum_{2^{s}\leq |\xi|<2^{s+1}}2^{-s\rho q}2^{srq} \right)^{1/q}\\
&\lesssim \sup_{s\in\mathbb{N}} \Vert \widehat{f} \Vert_{L^{p}(\mathbb{Z})}\left( 2^{sq(r-\rho)+s} \right)^{1/q}.\\
\end{align*}
By Lemma \ref{general} we have $\Vert \hat{f}(\xi)\Vert_{L^{p}(\mathbb{Z})}\lesssim \Vert f\Vert_{B^{s}_{\infty,\infty}}$ for $s_{p}<s<1.$ Since $r+\frac{1}{q}\leq \rho$ we get,  
$$ \Vert  \mathrm{Op}(\sigma)f\Vert_{  B^{r}_{\infty,\infty}  } \lesssim \Vert f \Vert_{B^{s}_{\infty,\infty}}  $$
which proves the boundedness of $\textrm{Op}(\sigma).$
\end{proof}

\subsection{Remarks and examples} There exists a connection between the $L^{p}$ boundedness of Fourier multipliers on compact Lie groups and its continuity on Besov spaces. This fact was proved by the author in Theorem 1.2 of \cite{Duvan4}. In fact, the Lie group structure of the torus $\mathbb{T}$ implies that every periodic Fourier multiplier bounded from $L^{p_{1}}$ into $L^{p_{2}}$ is bounded from $B^{r}_{p_1,q}$ into $B^{r}_{p_2,q},$ $r\in\mathbb{R}$ and $0<q\leq \infty.$ Since, in general, the boundedness of Fourier multipliers satisfying the $\rho$-condition ---or pseudo-differential operators with symbols satisfying H\"ormander conditions but with limited regularity--- fails for $p_{i}=\infty$, we have concentrate  our attention to this case in the preceding subsection, in order to give boundedness of multipliers ---and of pseudo-differential operators--- in H\"older spaces $\Lambda^{r}\equiv B^{r}_{\infty,\infty}.$ 
\begin{remark}
With the discussion above in mind, periodic Fourier multipliers with symbol $\sigma(\xi)$ satisfying the variational Marcinkiewicz condition: 
\begin{equation}
\Vert \sigma \Vert_{L^{\infty}(\mathbb{Z})}+\sup_{j\geq 0}\sum_{2^j\leq |\xi|\leq 2^{j+1}}|\sigma(\xi+1)-\sigma(\xi)|<\infty,
\end{equation}
are bounded from $L^{p}(\mathbb{T}),$ $1<p<\infty$ and hence these operators are bounded on every Besov space $B^{r}_{p,q}(\mathbb{T})$ but, its boundedness on H\"older spaces fails (Corollary 4.3 of \cite{Ar}). It is important to mention that every Fourier multiplier satisfying the $\rho$-condition \eqref{rho2} with $0<\rho\leq 1$ also satisfies the variational Marcinkiewicz condition and, as a consequence, these operators are bounded on $L^{p},$ $1<p<\infty$ and on every Besov space $B^{r}_{p,q},$ $r\in\mathbb{R}$, $1<p<\infty$ and $0<q\leq \infty.$
\end{remark}
\begin{remark}
Theorems \ref{main}, \ref{hardy}, \ref{trie}, \ref{trie2}, \ref{min} and \ref{13} give boundedness of Fourier multipliers from H\"older spaces $B^{s}_{\infty,\infty}$ into Besov spaces $B^{r}_{p,q}$ or spaces of Triebel Lizorkin $F^{r}_{p,q}$. Theorem \ref{main} shows a dependence of the parameters $\rho,r$ and $s.$ Nevertheless, as a consequence of the Hardy-Littlewood inequality, Theorem \ref{hardy} relaxes this type of conditions for $2\leq p<\infty$ by imposing restrictions on $\rho, r$ and $p.$ On the other hand, for $\frac{2}{3}<p\leq 2,$ Theorem \ref{13} only consider a dependence on the parameters $r,\rho$ and $q.$ These theorems have been proved by using non-trivial modifications of the proof of the Bernstein Theorem \cite{b}.  Theorems \ref{main22}, \ref{non} and \ref{HTC} have been proved using the Sobolev embedding theorem as a fundamental tool.
\end{remark}
\begin{remark}\label{rhodelta}
Notice that the results of this section illustrate a very important connection between  $L^p$ boundedness and H\"{o}lder boundedness. Indeed, by observing the proof of Theorem \ref{main22}, the condition on the symbol
\begin{equation}
|\partial_{x}^\beta \sigma(x,\xi)|\leq C_{\beta}|\xi|^{-\rho},\,\,\,\,|\beta|\leq [1/p]+1.\,\,\xi\neq 0,
\end{equation}
guarantees the boundedness of $\textnormal{Op}(\sigma),$ from $B^{s}_{\infty,\infty}$ into $B^{r}_{\infty,\infty}$ for $r-s+\frac{1}{2}\leq \rho\leq 1.$ For the $L^p$ boundedness, $1<p<\infty$ of $\textnormal{Op}(\sigma)$ (see Theorem 3.7 of \cite{DR4}) it is sufficient to consider the following condition
\begin{equation}\label{holderbesov}
|\Delta_{\xi}^{\alpha}\partial_{x}^\beta \sigma(x,\xi)|\leq C_{\beta}|\xi|^{-\rho-\tilde{\rho}|\alpha|},\,\,\,\,|\beta|\leq [1/p]+1, |\alpha|\leq 2.\,\,\xi\neq 0,
\end{equation}
for $\rho=2(1-\tilde{\rho})|\frac{1}{p}-\frac{1}{2}|,$ $0\leq \tilde{\rho}\leq 1.$ Thus, if we consider the inequality \eqref{holderbesov}, with 
$$ r-s+\frac{1}{2}\leq \rho=2(1-\tilde{\rho})\left|\frac{1}{p}-\frac{1}{2}\right|\leq 1, $$
we obtain the boundedness of $\textnormal{Op}(\sigma)$ from $B^{s}_{\infty,\infty,p}$ into $B^{r}_{\infty,\infty,p}.$
\end{remark}
We end this  section with the following examples on operators satisfying the $\rho$-condition and on elliptic regularity in H\"older spaces.
\begin{example}
Let $X$ be a left-invariant real vector field on the torus $\mathbb{T}.$ By Corollary 2.7 of \cite{Ruz-3}, there exists an exceptional set $\mathscr{C}\subset i\mathbb{R},$ such that for all $c\notin \mathscr{C}, $ the operator $X+c$ is invertible with inverse satisfying the $\rho$-condition with $\rho=0.$ By Theorem \ref{main}, we have for $r+\frac{1}{2}<s\leq1,$ $0<p\leq \infty$ and $0<q<\infty:$
\begin{equation}
\Vert f \Vert_{B^{r}_{p,q}}\leq C\Vert (X+c)f\Vert_{B^{s}_{\infty,\infty}}.
\end{equation}
On the other hand, if we consider $r+\frac{1}{2}\leq s\leq1,$ we obtain the estimate,
\begin{equation}
\Vert f \Vert_{B^{r}_{p,\infty}}\leq C\Vert (X+c)f\Vert_{B^{s}_{\infty,\infty}}. 
\end{equation}
Analogous estimates may be obtained if we apply Theorem \ref{hardy}, Theorem \ref{trie} or Theorem \ref{13}. Similar results also can be considered if we replace $X$ by the partial Riesz transform $\mathfrak{R}=(-\mathcal{L}_{\mathbb{T}})^{-\frac{1}{2}}\circ X,$  of some negative power of the Laplace operator $\mathcal{L}_{\mathbb{T}}$.
\end{example}
\begin{example}
Let $0<r<1$ and $f\in B^{r}_{\infty,\infty}(\mathbb{T}) .$ Consider the toroidal pseudo-differential problem

\begin{equation}  
     \textnormal{Op}(\sigma)u =f, \\ \vspace{0.2cm}    
\end{equation}
where $\textnormal{Op}(\sigma)$ is an elliptic operator with symbol $\sigma(x,\xi)\in S^m_{\rho,\delta},$ $m>0,$ (in particular, $\textnormal{Op}(\sigma)$ can be an elliptic differential operator of the form $\sum_{0\leq i\leq m}a_{i}(x)\partial^{i}_{x},\,$ $m\geq 1$). By the existence of parametrices for elliptic operators, (see Theorems \ref{compo} and \ref{para}), there exists $q\in S^{-m}_{\rho,\delta}$ and $r\in S^{-\infty}$ such that
 \begin{equation}\textnormal{Op}(q)\circ \textnormal{Op}(\sigma)=I+\textnormal{Op}(r).
 \end{equation}
Therefore, $ \textnormal{Op}(q)\circ \textnormal{Op}(\sigma)u=u+\textnormal{Op}(r)u=\textnormal{Op}(q)f.  $ By using Theorem \ref{main22}, we have for $0< r\leq s+m-\frac{1}{2}, $ $0<s<1,$ 
$$  \Vert \textnormal{Op}(q)f\Vert _{B^{r}_{\infty,\infty,p}}\leq C\Vert f\Vert _{B^{s}_{\infty,\infty,p}},$$ and considering that the operator $\textnormal{Op}(r)$ is a smoothing operator we get $u\in B^{r}_{\infty,\infty,p}.$ In conclusion, under the pseudo-differential problem considered, if $f\in B^{s}_{\infty,\infty,p}(\mathbb{T}) $ then $u\in B^{r}_{\infty,\infty,p}(\mathbb{T}).$ A similar {\textit{a priori estimate}} can be obtained if we consider Theorem \ref{HTC}.
\end{example}
\noindent \textbf{Acknowledgments.} 
I would like to thank the anonymous referee for his remarks which helped to improve the manuscript. The author is indebted with Alexander Cardona for helpful comments on an earlier draft of this paper. This project was partially supported by Universidad de los Andes, Mathematics Department.

\bibliographystyle{amsplain}

\end{document}